\newcommand{\maxent}{MaxEnt}
\newcommand{\e}{\mathrm{e}}
\newcommand{\dd}{\,\mathrm{d}}
\newcommand{\R}{\mathbb{R}}
\newcommand{\ie}{\emph{i.e.}}
\newcommand{\Er}[1]{E^{(#1)}}
\newcommand{\Be}{\mathrm{Be}}
\def\E{\operatorname{E}}
\def\var{\operatorname{Var}}
\def\co{\operatorname{co}}
\def\interior{\operatorname{int}}
\def\argmin{\operatornamewithlimits{arg\,min}}
\def\set#1{\left\{ #1 \right\}}
\def\abs#1{\left\lvert #1 \right\rvert}
\def\norm#1{\left\lVert #1 \right\rVert}
\def\ip#1{\left\langle #1 \right\rangle}
\newtheorem{thm}{\bf Theorem}
\newtheorem{lem}[thm]{\bf Lemma}
\newtheorem{assump}{\bf Assumption}
\numberwithin{equation}{section}
\title{Discretizing Distributions with Exact Moments: Error Estimate and Convergence Analysis\thanks{\today}}
\author{Ken'ichiro Tanaka\thanks{Corresponding author: School of Systems Information Science, Future University Hakodate (\nobreak{\href{mailto:ketanaka@fun.ac.jp}{\tt ketanaka@fun.ac.jp}}).} 
\and Alexis Akira Toda\thanks{Department of Economics, University of California San Diego (\href{mailto:atoda@ucsd.edu}{\tt atoda@ucsd.edu}).}}
\begin{document}

\maketitle

\begin{abstract}
The maximum entropy principle is a powerful tool for solving underdetermined inverse problems.  This paper considers the problem of discretizing a continuous distribution, which arises in various applied fields.  We obtain the approximating distribution by minimizing the Kullback-Leibler information (relative entropy) of the unknown discrete distribution relative to an initial discretization based on a quadrature formula subject to some moment constraints.  We study the theoretical error bound and the convergence of this approximation method as the number of discrete points increases.  We prove that (i) the theoretical error bound of the approximate expectation of any bounded continuous function has at most the same order as the quadrature formula we start with, and (ii) the approximate discrete distribution weakly converges to the given continuous distribution.  Moreover, we present some numerical examples that show the advantage of the method and apply to numerically solving an optimal portfolio problem.
\end{abstract}

\begin{keywords} 
probability distribution, 
discrete approximation, generalized moment, 
quadrature formula, 
Kullback-Leibler information, Fenchel duality
\end{keywords}

\begin{AMS}
41A25, 41A29, 62E17, 62P20, 65D30, 65K99
\end{AMS}

%

\pagestyle{myheadings}
\thispagestyle{plain}
\markboth{KEN'ICHIRO TANAKA AND ALEXIS AKIRA TODA}{DISCRETIZING DISTRIBUTIONS WITH EXACT MOMENTS}

\section{Introduction}\label{sec:intro}
This paper has two goals.  First, we propose a numerical method to approximate continuous probability distributions by discrete ones.  Second, we study the convergence of the method and derive error estimates.  Discretizing continuous distributions is important in applied numerical analysis \cite{keefer-bodily1983,miller-rice1983}.  To motivate the problem, we list a few concrete examples, many of which come from economics.

\paragraph{Optimal portfolio problem}
Suppose that there are $J$ assets indexed by $j=1,\dots,J$.  Asset $j$ has gross return $R_j$ (which is a random variable), which means that a dollar invested in asset $j$ will give a total return of $R_j$ dollars over the investment horizon.  Let $\theta_j$ be the fraction of an investor's wealth invested in asset $j$ (so $\sum_{j=1}^J\theta_j=1$) and $\theta=(\theta_1,\dots,\theta_J)$ be the portfolio.  Then the gross return on the portfolio is the weighted average of each asset return, $R(\theta):=\sum_{j=1}^JR_j\theta_j$.  Assume that the investor wishes to maximize the risk-adjusted expected returns $\E\left[\frac{1}{1-\gamma}R(\theta)^{1-\gamma}\right]$, where $\gamma>0$ is the degree of relative risk aversion.\footnote{Readers unfamiliar with economic concepts need not worry here.  The point is that we want to maximize an expectation that is a function of some parameter.  The case $\gamma=1$ corresponds to the log utility $\E[\log R(\theta)]$.}  Since in general this expectation has no closed-form expression in the parameter $\theta$, in order to maximize it we need to carry out a numerical integration.  This problem is equivalent to finding an approximate discrete distribution of the returns $(R_1,\dots,R_J)$.

\paragraph{Optimal consumption-portfolio problem}
In the above example the portfolio choice was a one time decision.  But we can consider a dynamic version, where the investor chooses the optimal consumption and portfolio over time (Samuelson \cite{samuelson1969} and Merton \cite{merton1971} are classic examples that admit closed-form solutions.  Almost all practical problems, however, admit no closed-form solutions).  Since we need to numerically solve a portfolio problem for each time period, we face one more layer of complexity.

\paragraph{General equilibrium problem}
Instead of solving the optimization problem of a single investor, we can consider a model of the whole economy, and might want to determine the asset prices that make demand equal to supply.  This is called a general equilibrium problem in economics.  Since we need to solve a dynamic optimal portfolio problem given asset prices, and we need to find asset prices that clear markets, we face yet another layer of complexity.  Such problems are especially computationally intensive \cite{huggett1993,aiyagari1994,krusell-smith1998}, and it is important to discretize a continuous distribution with only a small number of support points in order to mitigate the `curse of dimensionality'.

\paragraph{Discretizing stochastic processes}
In many fields, including decision analysis, economic modeling, and option pricing, it is necessary to discretize stochastic processes such as diffusions or autoregressive processes to generate scenario trees \cite{hoyland-wallace2001,pflug2001} or finite state Markov chain approximations \cite{tauchen1986-EL,tauchen-hussey1991,gospodinov-lkhagvasuren2014}.

The above examples have the following common feature.  The researcher is given a continuous density $f$ (transition densities in the case of a stochastic processes), which comes from either some theoretical model or data (say the kernel density estimation).  The density $f$ is used to solve a complicated model.  In order to reduce the complexity of the problem, the researcher wishes to discretize the density.

Since the density is ultimately used to compute expectations, a natural idea is to start with some quadrature formula
\begin{equation}
\E[g(X)] = \int_{\R^{K}} g(x)f(x)\dd x
\approx
\sum_{i=1}^{I_M} w_{i, M}g(x_{i,M})f(x_{i,M}),
\label{eq:SimpleQuadApprox}
\end{equation}
where $M$ is an index of the quadrature formula (for example, the grid size in each dimension), $I_M$ is the number of integration points $\set{x_{i,M}}$, and $w_{i,M}$ is the weight on the point $x_{i,M}$.  Then the probability mass function $q_{i,M}\propto w_{i,M}f(x_{i,M})$ is a valid discrete approximation of the continuous density $f$.  Popular quadrature formulas---such as the Newton-Cotes type or the Gauss type formulas (see \cite{davis-rabinowitz1984} for a standard textbook treatment)---may not be suitable for our purpose, however.  First, the choice of the quadrature formula automatically determines the discrete points $\set{x_{i,M}}$.  However, in practice the researcher often wishes to choose the set of points $D\subset \R^K$ at his or her will, depending on the specific application.  Second, for a general distribution $f$, the quadrature formula \eqref{eq:SimpleQuadApprox} may not even match low order moments such as the mean or the variance, \ie, 
$$\E[X^l] = \int_{-\infty}^\infty x^l f(x)\dd x\neq \sum_{i=1}^{I_M} w_{i, M}x_{i,M}^lf(x_{i,M})$$
for $l=1,2$ (in the one-dimensional case).\footnote{Matching moments of a single density with a quadrature formula is not hard.  The difficulty arises when discretizing a stochastic process.  If we approximate a stochastic process by a finite-state Markov chain, when the process hits the lower or upper boundary of the support points, the conditional moments under the true transition density and the approximate probabilities implied by the quadrature formula will be vastly different, making the approximation poor.}  Although the Newton-Cotes and the Gauss formulas (in the one-dimensional case) match low order moments exactly, the multi-dimensional case is not trivial.  This is a major disadvantage because matching moments is known to improve the accuracy of the analysis \cite{smith1993}.  Third, some quadrature formulas have \emph{negative} weights $w_{i,M}$ when the order is high.  Since we want to obtain probabilities, which are necessarily nonnegative, such formulas are inappropriate.  Thus it is desirable to obtain a discretization method that (i) has a flexible choice of the integration points, (ii) matches at least low order moments, and (iii) always assigns positive weights.

Several methods for discrete approximations of continuous distributions have been proposed in the literature.  Given a continuous probability distribution, Tauchen \cite{tauchen1986-EL} and Adda and Cooper \cite{adda-cooper2003} adopt simple partitions of the support of the distribution and assign the true probability to a representative point of each partitioned domain.  Although their methods are intuitive, simple, and work in any dimension, their methods are not so accurate, for they generate discrete distributions with only approximate moments.  Miller and Rice \cite{miller-rice1983}, Tauchen and Hussey \cite{tauchen-hussey1991}, and Smith \cite{smith1993} discretize the density function using the weights and the points of the Gaussian quadrature, and Devuyst and Preckel \cite{devuyst-preckel2007} consider its generalization to multi-dimensions.  Although their methods are often more accurate and can match prescribed polynomial moments exactly, they do not allow for the restriction $\set{x_{i, M}} \subset D$ and cannot be applied to non-polynomial moments.  Furthermore, the multi-dimensional method by Devuyst and Preckel \cite{devuyst-preckel2007} is computationally intensive and does not have a theoretical guarantee for the existence of the discretization, error bounds, or convergence.

As a remedy, in Tanaka and Toda \cite{tanaka-toda-maxent-approx} we proposed an approximation method based on the maximum entropy principle (\maxent) that exactly matches prescribed moments of the distribution.  Starting with any discretization, we ``fine-tune'' the given probabilities by minimizing the Kullback-Leibler information (relative entropy) of the unknown probabilities subject to some moment constraints.  In that paper we proved the existence and the uniqueness of the solution to the minimization problem, showed that the solution can be easily computed by solving the dual problem, and presented some numerical examples that show that the approximation method is satisfactorily accurate.  The method is computationally very simple and works on any discrete set $D$ of any dimension with any prescribed moments (not necessarily polynomials).  However, up to now the theoretical approximation error and the convergence of this method remain unknown.

This paper gives a theoretical error bound for this approximation method and shows its convergence.  We first evaluate the theoretical error of our proposed method.  It turns out that the order of the theoretical error estimate is at most that of the initial discretization, and actually improves if the integrand is well-approximated by the moment defining function.  Thus our proposed method does not compromise the order of the error at the expense of matching moments.  Second, as a theoretical consequence of the error estimate, we show the weak convergence of the discrete distribution generated by the method to the given continuous distribution.  This means that for any bounded continuous function $g$, the expectation of $g$ under the approximating discrete distribution converges to that under the exact distribution as the number of integration points increases.  This property is practically important because it guarantees that the approximation method never generates a pathological discrete distribution with exact moments which has extremely different probability from the given distribution on some domain, at least when the discrete set is large enough.  In addition, we present some numerical examples (including a numerical solution to an optimal portfolio problem) that show the advantage of our proposed method.

The idea of using the maximum entropy principle to obtain a solution to underdetermined inverse problems (such as the Hausdorff moment problem) is similar to that of Jaynes \cite{jaynes1982} and Mead and Papanicolaou \cite{mead-papanicolaou1984}. 
There is an important distinction, however.  In typical inverse problems, one studies the convergence of the approximating solution to the true one when \emph{the number of moment constraints} tends to infinity.  In contrast, in this paper we study the convergence when \emph{the number of approximating points} tends to infinity, fixing the moments.  Thus the two problems are quite different.  The literature on the foundations, implementations, and the applications of maximum entropy methods is immense: any literature review is necessarily partial.  The maximum entropy principle (as an inference method in general, not necessarily restricted to physics) was proposed by Jaynes \cite{jaynes1957a}.  For axiomatic approaches, see Shore and Johnson \cite{shore-johnson1980}, Jaynes \cite{jaynes2003}, Caticha and Giffin \cite{caticha-giffin2006-AIP}, and Knuth and Skilling \cite{knuth-skilling2012}.  For the relation to Bayesian inference, see Van Campenhout and Cover \cite{vancampenhout-cover1981} and Csisz{\'a}r \cite{csiszar1984}.  For the duality theory of entropy maximization, see \cite{borwein-lewis1991,DHLN1992,hauck-levermore-tits2008}.  For numerical algorithms for computing maximum entropy densities, see \cite{abramov2007,abramov2010,AHOT2014}.  Budi\v{s}i\'c and Putinar \cite{budisic-putinar2012} study the opposite problem of ours, namely transforming a probability measure to one that is absolutely continuous with respect to the Lebesgue measure.  Applications of maximum entropy methods can be found in economics \cite{foley1994,toda2010,toda-BGE}, statistics and econometrics \cite{barron-sheu1991,kitamura-stutzer1997,wu2003}, finance \cite{stutzer1995,stutzer1996,buchen-kelly1996}, among many other fields.


\section{The approximation method}\label{sec:OurMethod}
This section reviews the discrete approximation method of continuous distributions proposed in \cite{tanaka-toda-maxent-approx}. 

Let $f$ be a probability density function on $\R^K$ with some generalized moments 
\begin{align}
\bar{T}=\int_{\R^{K}} f(x)T(x)\dd x,
\label{eq:given_moments}
\end{align}
where $T:\R^K\to \R^L$ is a continuous function.  (Below, we sometimes refer to this function as the ``moment defining function''.)
For instance, if we are interested in the first and second polynomial moments (\ie, mean and variance), $T$ becomes
$$T(x)=(x_1,\dots,x_K,x_1^2\dots,x_kx_l,\dots,x_K^2).$$
In this case, we have $K$ expectations, $K$ variances, and $\frac{K(K-1)}{2}$ covariances.  Hence the total number of moment constraints (the dimension of the range space of $T$) is
\begin{equation}
L=K+K+\frac{K(K-1)}{2}=\frac{K(K+3)}{2}.\label{eq:nummoments}
\end{equation}
In general, the components of $T(x)$ need not be polynomials.

Moreover, for each positive integer $M$, assume that 
a finite discrete set
$$D_{M} = \set{x_{i, M} \mid i = 1, \ldots, I_M} \subset \R^K$$
is given, where $I_M$ is the number of discrete points.  An example of $D_{M}$ is the lattice 
$$D_{M} = \set{ (m_{1} h, m_{2} h, \ldots, m_{K} h) \mid m_{1}, m_{2} \ldots, m_{K} = 0, \pm 1, \dots, \pm M},$$
where $h>0$ is the grid size, in which case $I_M = (2M+1)^{K}$.  Our aim is to find a discrete probability distribution 
$$P_{M} = \set{ p_{i,M} \mid x_{i, M}\in D_{M}}$$
on $D_{M}$ with exact moments $\bar{T}$ that approximates $f$ (in the sense of the weak topology, that is, convergence in distribution).

To match the moments $\bar{T}$ with $P_{M} = \set{ p_{i,M} \mid x_{i, M}\in D_{M}}$, it suffices to assign probabilities $\set{p_{i,M}}$ such that
\begin{align}
\sum_{i=1}^{I_M} p_{i,M}T(x_{i,M})=\bar{T}.
\label{eq:P_const}
\end{align}
Note that the solution to this equation is generally underdetermined because the number of unknowns---$p_{i,M}$'s---of which there are $I_M$, is typically much larger than the number of equations (moments), $L+1$.\footnote{The ``$+1$'' comes from accounting the probabilities $\sum_{i=1}^{I_M} p_{i,M}=1$.}
To obtain $P_{M}$ that approximates $f$ and satisfies \eqref{eq:P_const}, we first choose an arbitrary discretization of $f$ with \emph{not necessarily exact moments}, which we denote by $Q_M=\set{q_{i,M}}$.  For example, if we already have a numerical quadrature formula 
\begin{equation}
\int_{\R^{K}} f(x)g(x)\dd x\approx \sum_{i=1}^{I_M} w_{i,M}f(x_{i,M})g(x_{i,M})\label{eq:quad}
\end{equation}
with positive weights $w_{i,M}$ $(i=1,2,\ldots, I_M)$, where $g$ is an arbitrary function that we want to compute the expectation with respect to the density $f$, then it is natural to set $q_{i,M}$ proportional to $w_{i,M}f(x_{i,M})$, \ie,
$$q_{i,M}=\frac{w_{i,M}f(x_{i,M})}{\sum_{i=1}^{I_M} w_{i,M}f(x_{i,M})}.$$
In the following, we do not address how to choose $Q_M$ (or the quadrature formula \eqref{eq:quad}) but take it as given.

Now the approximation method is defined as follows.  Let $g$ be any bounded continuous function.  Then the approximation error of the expectation under $P_M$ is
\begin{align}
&\abs{\int_{\R^K} f(x)g(x)\dd x-\sum_{i=1}^{I_M} p_{i,M}g(x_{i,M})}\label{eq:triangle}\\
&\le \abs{\int_{\R^K} f(x)g(x)\dd x-\sum_{i=1}^{I_M} q_{i,M}g(x_{i,M})}+\abs{\sum_{i=1}^{I_M} q_{i,M}g(x_{i,M})-\sum_{i=1}^{I_M} p_{i,M}g(x_{i,M})}\notag\\
&\le \abs{\int_{\R^K} f(x)g(x)\dd x-\sum_{i=1}^{I_M} q_{i,M}g(x_{i,M})}+\norm{g}_\infty\sum_{i=1}^{I_M}\abs{p_{i,M}-q_{i,M}},\notag
\end{align}
where $\norm{g}_\infty=\sup_{x\in \R^K}\abs{g(x)}$ is the sup norm.  Since the first term depends only on the initial discretization $Q_M$, we focus on the second term.  By Pinsker's inequality,\footnote{See \cite{csiszar1967,kullback1967} for proofs of Pinsker's inequality and \cite{fedotov-harremoes2003} and references therein for refinements.  The appendix of this paper gives a short proof of Pinsker's inequality.} the second term can be bounded as follows:
\begin{equation}
\sum_{i=1}^{I_M}\abs{p_{i,M}-q_{i,M}}\le \sqrt{2H(P_M;Q_M)},\label{eq:Pinsker}
\end{equation}
where
\begin{equation}
H(P_M;Q_M)=\sum_{i=1}^{I_M} p_{i,M}\log \frac{p_{i,M}}{q_{i,M}}\label{eq:KL}
\end{equation}
is the Kullback-Leibler information \cite{kullback-leibler1951}.

In order to make the approximation error \eqref{eq:triangle} small, given Pinsker's inequality \eqref{eq:Pinsker}, it is quite natural to obtain the approximate discrete distribution $P_{M} = \set{p_{i,M} \mid x_{i, M}\in D_{M}}$ as the solution to the following optimization problem:
\begin{align}
&\min_{\set{p_{i,M}}} \sum_{i=1}^{I_M} p_{i,M}\log \frac{p_{i,M}}{q_{i,M}} \tag{P}\label{eq:P}\\
&\text{subject to}~\sum_{i=1}^{I_M} p_{i,M}T(x_{i, M})=\bar{T},~\sum_{i=1}^{I_M} p_{i,M}=1,~p_{i,M}\ge 0. \notag
\end{align}
The first constraint matches the moments $\int f(x)T(x)\dd x=\bar{T}$ exactly.  $\sum_{i=1}^{I_M} p_{i,M}=1$ and $p_{i,M}\ge 0$ ensure that $\set{p_{i,M}}$ is a probability mass function.  The problem \eqref{eq:P} has a unique solution if $\bar{T} \in \co T(D_{M})$, where $\co T(D_{M})$ is the convex hull of $T(D_{M})$ defined by
\begin{align}
\co T(D_{M})
= \set{ \sum_{i=1}^{I_M} \alpha_{i, M} T(x_{i, M}) \,
\left| \, \sum_{i=1}^{I_M} \alpha_{i, M} =1 \text{ and } \alpha_{i, M} \ge 0 \right. },
\label{eq:convTD}
\end{align}
because in that case the constraint set is nonempty, compact, convex, and the objective function is continuous (by adopting the convention $0\log 0=0$) and strictly convex.

To characterize the solution of \eqref{eq:P}, we consider the Fenchel dual\footnote{See \cite{borwein-lewis1991} for an application of the Fenchel duality to entropy-like minimization problems.} of \eqref{eq:P},
\begin{equation}
\max_{\lambda\in\R^L} 
\left[\ip{\lambda,\bar{T}}-\log \left(\sum_{i=1}^{I_M} q_{i,M}\e^{\ip{ \lambda,T(x_{i,M}) } }\right)\right], \tag{D}\label{eq:D}
\end{equation}
where $\ip{\, \cdot \, , \, \cdot \,}$ denotes the usual inner product in $\R^{L}$.  Tanaka and Toda \cite{tanaka-toda-maxent-approx} show that we can obtain the solution to \eqref{eq:P} as fine-tuned values of $q_{i,M}$, and that the minimum value of \eqref{eq:P} and the maximum value of \eqref{eq:D} coincide.  Although these properties are routine exercises in convex duality theory (see for example \cite{borwein-lewis2006} for a textbook treatment), we present them nevertheless in order to make the paper self-contained.
\begin{thm}\label{thm:tanaka-toda}
Suppose that $\bar{T} \in \interior\co T(D_{M})$, where $\interior$ denotes the interior. Then the following is true.
\begin{enumerate}
\item The dual problem \eqref{eq:D} has a unique solution $\lambda_M$.
\item The probability distribution $P_{M} = \set{ p_{i,M} \mid x_{i,M} \in D_{M} }$ defined by
\begin{equation}
p_{i,M}
=
\frac{q_{i,M}\e^{\ip{ \lambda_{M},T(x_{i,M}) } }}{\sum_{i=1}^{I_M} q_{i,M}\e^{\ip{ \lambda_{M},T(x_{i,M}) } }}
\label{eq:solution_P}
\end{equation}
is the unique solution to \eqref{eq:P}.
\item The duality $H(P_M;Q_M)=\min \eqref{eq:P}=\max \eqref{eq:D}$ holds.
\end{enumerate}
\end{thm}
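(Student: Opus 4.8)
The plan is to derive all three assertions from the smoothness and convexity of the single dual objective. Write $Z_M(\lambda) := \sum_{i=1}^{I_M} q_{i,M}\e^{\ip{\lambda,T(x_{i,M})}}$ and let $\Phi_M(\lambda) := \ip{\lambda,\bar{T}} - \log Z_M(\lambda)$ be the function maximized in \eqref{eq:D}. Since $\log Z_M$ is a log-sum-exp, it is smooth and convex, so $\Phi_M$ is smooth and concave; thus \eqref{eq:D} is an unconstrained concave maximization whose stationarity condition is exactly the moment-matching equation. The whole argument then reduces to (a) showing $\Phi_M$ has a unique maximizer $\lambda_M$, and (b) connecting stationarity at $\lambda_M$ to optimality in \eqref{eq:P} through Gibbs' inequality.

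For uniqueness I would compute the Hessian of $\log Z_M$ and recognize it as the covariance matrix of $T$ under the tilted probabilities $p_{i,M}(\lambda)\propto q_{i,M}\e^{\ip{\lambda,T(x_{i,M})}}$. This matrix is positive definite precisely when the points $\set{T(x_{i,M})}$ are not contained in a proper affine subspace of $\R^L$; and the hypothesis $\bar{T}\in\interior\co T(D_M)$ forces $\co T(D_M)$ to be full-dimensional, which is exactly this non-degeneracy condition. Hence $\Phi_M$ is strictly concave and admits at most one maximizer.

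Existence is where I expect the main work. The idea is to prove coercivity, $\Phi_M(\lambda)\to-\infty$ as $\norm{\lambda}\to\infty$, and conclude by continuity that the supremum is attained. Writing $\lambda = tu$ with $\norm{u}=1$ and bounding $\log Z_M(tu)\ge t\max_i\ip{u,T(x_{i,M})}+\log\min_i q_{i,M}$ (the weights being positive), one obtains $\Phi_M(tu)\le t(\ip{u,\bar{T}}-h(u))+C$, where $h(u):=\max_{y\in\co T(D_M)}\ip{u,y}$ is the support function of $\co T(D_M)$. The interior hypothesis gives $\ip{u,\bar{T}}<h(u)$ strictly for every unit vector $u$, since $\bar{T}+\varepsilon u$ still lies in $\co T(D_M)$ for small $\varepsilon>0$; compactness of the unit sphere then upgrades this to a uniform gap $h(u)-\ip{u,\bar{T}}\ge\delta>0$. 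This makes $\Phi_M(tu)\le -\delta t + C\to-\infty$ uniformly, which is the crux of the proof and the step that genuinely uses $\interior$ rather than mere membership in $\co T(D_M)$. Together with strict concavity this settles the first claim.

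Finally, setting $\nabla\Phi_M(\lambda_M)=0$ gives $\sum_i p_{i,M}(\lambda_M)T(x_{i,M})=\bar{T}$ with $p_{i,M}(\lambda_M)$ as in \eqref{eq:solution_P}, so this $P_M$ is feasible for \eqref{eq:P}. For optimality and strong duality I would use a single identity: for any feasible $P$ and any $\lambda$, substituting $\ip{\lambda,\bar{T}}=\sum_i p_{i,M}\ip{\lambda,T(x_{i,M})}$ and $\sum_i p_{i,M}=1$ shows that $H(P;Q_M)-\Phi_M(\lambda)=H(P;\hat{P}(\lambda))$, where $\hat{P}(\lambda)$ is the tilted distribution with masses $p_{i,M}(\lambda)$. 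By Gibbs' inequality this relative entropy is nonnegative, which is weak duality, and it vanishes exactly when $P=\hat{P}(\lambda)$. Taking $\lambda=\lambda_M$ and $P=P_M=\hat{P}(\lambda_M)$ gives $H(P_M;Q_M)=\Phi_M(\lambda_M)=\max\eqref{eq:D}$, establishing the third claim; and since then $H(P;Q_M)\ge H(P_M;Q_M)$ for every feasible $P$ with equality only at $P_M$, this simultaneously yields the second claim and reconfirms uniqueness of the primal solution.
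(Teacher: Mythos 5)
Your proof is correct and is precisely the ``routine exercise in convex duality theory'' that the paper invokes: the paper states Theorem~\ref{thm:tanaka-toda} without giving a proof (deferring to \cite{tanaka-toda-maxent-approx} and the Fenchel-duality references), and your argument---strict concavity of the dual objective via the tilted covariance matrix, coercivity from the interior hypothesis via the support function of $\co T(D_M)$, and the identity $H(P;Q_M)-\Phi_M(\lambda)=H(P;\hat{P}(\lambda))\ge 0$ with equality iff $P=\hat{P}(\lambda)$---is the standard and complete way to establish all three claims at once. The only point worth flagging is your tacit assumption that every $q_{i,M}>0$ (used both in the bound $\log Z_M(tu)\ge t\max_i\ip{u,T(x_{i,M})}+\log\min_i q_{i,M}$ and in matching non-degeneracy of the tilted covariance to full-dimensionality of $\co T(D_M)$); this is consistent with how the paper itself states the hypothesis, but if some $q_{i,M}$ vanish, the convex hull and the maximum should be taken over the indices with $q_{i,M}>0$.
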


Our proposed approximation method has two advantages.  The first is the computational simplicity.  The dual problem \eqref{eq:D} is an \emph{unconstrained} optimization problem with typically a small number of unknowns ($L$), whereas the primal problem \eqref{eq:P} is a \emph{constrained} optimization problem with typically a large number of unknowns ($I_M$).  With existing methods such as Gospodinov and Lkhagvasuren \cite{gospodinov-lkhagvasuren2014} that target the first and second moments with a quadratic loss function, the optimization problem remains high dimensional and constrained.  For example, if we discretize a 3-dimensional distribution with 10 discrete points in each dimension and match the mean and the variances, then there will be $10^3=1,000$ unknowns, 9 moment constraints (according to \eqref{eq:nummoments}), and 1,000 nonnegativity constraints.  On the other hand, our dual problem \eqref{eq:D} is unconstrained and involves only 9 variables.  The second advantage is that the resulting weights on the points \eqref{eq:solution_P} are automatically positive, as they should be since they are probabilities.

\section{Error bound and convergence}\label{sec:ErrConv}
Let $g: \R^{K} \to \R$ be a bounded continuous function and
\begin{align}
\Er{Q}_{g, M} 
&=\abs{\int_{\R^{K}} f(x)g(x)\dd x - \sum_{i=1}^{I_M} q_{i,M}g(x_{i,M})},
\label{eq:Qerror}\\
\Er{P}_{g, M} 
&=\abs{\int_{\R^{K}} f(x)g(x)\dd x - \sum_{i=1}^{I_M} p_{i,M}g(x_{i,M})}
\label{eq:Perror}
\end{align}
be the approximation errors under the initial discretization $Q_M$ and $P_M=\set{p_{i,M}}$ in \eqref{eq:solution_P}.  In this section we estimate the error $\Er{P}_{g,M}$ and prove the weak convergence\footnote{For readers unfamiliar with probability theory, a sequence of probability measures $\set{\mu_n}$ is said to \emph{weakly converge} to $\mu$ if $\lim_{n\to\infty}\int g\dd \mu_n=\int g\dd \mu$ for every bounded continuous function $g$.  In particular, by choosing $g$ as an indicator function, we have $\mu_n(B)\to \mu(B)$ for any Borel set $B$, so the probability distribution $\mu_n$ approximates $\mu$.} of $P_{M}=\set{p_{i,M}}$ to $f$, \ie, $\Er{P}_{g, M} \to 0$ as $M \to \infty$ for any $g$.  Using the definition of the errors \eqref{eq:Qerror} and \eqref{eq:Perror}, the triangle inequality \eqref{eq:triangle}, and Pinsker's inequality \eqref{eq:Pinsker}, we obtain the following error estimate:
\begin{equation}
\Er{P}_{g,M}\le \Er{Q}_{g,M}+\norm{g}_\infty \sqrt{2H(P_M;Q_M)},\label{eq:errest1}
\end{equation}
where $H(P_M;Q_M)$ is the Kullback-Leibler information \eqref{eq:KL}.

We consider the error estimate and the convergence analysis under the following two assumptions.  The first assumption states that the moment defining function $T$ has no degenerate components and the moment $\bar{T}$ can also be expressed as an expectation on the discrete set $D_{M}$.

\begin{assump}
\label{assump:int_T}
The components of the moment defining function $T$ are affine independent on $\R^L \cap \mathop{\mathrm{supp}} f$: for any $0\neq (\lambda,\mu) \in \R^L\times\R$, there exists $x\in \mathop{\mathrm{supp}} f$ such that $\ip{\lambda,T(x)} + \mu \neq 0$.
Furthermore, $\bar{T} \in \interior (\co T(D_{M}))$ for any $M$.
\end{assump}

The second assumption concerns the convergence of the initial discretization $Q_M=\set{q_{i,M}}$.

\begin{assump}
\label{assump:int_formula}
The initial discretization weakly converges: for any bounded continuous function $g$ on $\R^{K}$, we have
\begin{equation}
\lim_{M \to \infty}\sum_{i=1}^{I_M} q_{i,M}g(x_{i,M}) = \int_{\R^{K}} f(x)g(x)\dd x. \label{eq:int_gconv}
\end{equation}
Furthermore, the targeted moments converge as well:
\begin{equation}
\lim_{M \to\infty}
\sum_{i=1}^{I_M} q_{i,M}T(x_{i,M})=\int_{\R^{K}} f(x)T(x)\dd x. \label{eq:int_Tconv}
\end{equation}
\end{assump}
Assumption \ref{assump:int_formula} is natural since any discrete approximation should become accurate as we increase the number of grid points.  Under this assumption, we have $\Er{Q}_{g,M}\to 0$ as $M\to\infty$, so by \eqref{eq:errest1} we have $\Er{P}_{g,M}\to 0$ whenever $H(P_M;Q_M)\to 0$ as $M\to \infty$.  Below, we focus on estimating $H(P_M;Q_M)$.

\begin{lem}\label{lem:H_minJ}
Define the function $J_M:\R^L\to \R$ by
\begin{equation}
J_M(\lambda)=\sum_{i=1}^{I_M} q_{i,M}\e^{\ip{\lambda,T(x_{i,M})-\bar{T}}}.\label{eq:Def_J}
\end{equation}
Then
\begin{equation}
H(P_M;Q_M)=-\log \left(\min_{\lambda\in\R^L}J_M(\lambda)\right).\label{eq:H_minJ}
\end{equation}
\end{lem}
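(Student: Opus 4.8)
The plan is to recognize that $-\log J_M$ is, up to a factored-out term, exactly the dual objective appearing in \eqref{eq:D}, and then to invoke the duality already established in Theorem~\ref{thm:tanaka-toda}.

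First I would rewrite the defining sum \eqref{eq:Def_J} by pulling the $\bar{T}$-contribution out of each exponential. Since $\ip{\lambda,T(x_{i,M})-\bar{T}}=\ip{\lambda,T(x_{i,M})}-\ip{\lambda,\bar{T}}$ and the latter inner product does not depend on the index $i$, I can factor
\[
J_M(\lambda)=\e^{-\ip{\lambda,\bar{T}}}\sum_{i=1}^{I_M} q_{i,M}\e^{\ip{\lambda,T(x_{i,M})}}.
\]
Taking logarithms then gives
\[
-\log J_M(\lambda)=\ip{\lambda,\bar{T}}-\log\left(\sum_{i=1}^{I_M} q_{i,M}\e^{\ip{\lambda,T(x_{i,M})}}\right),
\]
and the right-hand side is precisely the objective function of the dual problem \eqref{eq:D}.

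Next, since $t\mapsto-\log t$ is strictly decreasing on $(0,\infty)$ and $J_M(\lambda)>0$ for every $\lambda$ (it is a sum of positive terms, as $q_{i,M}>0$), minimizing $J_M$ over $\R^L$ is equivalent to maximizing $-\log J_M$, with $\max_{\lambda}[-\log J_M(\lambda)]=-\log(\min_{\lambda}J_M(\lambda))$. Combined with the identification above, this shows that the maximum value of \eqref{eq:D} equals $-\log(\min_{\lambda}J_M(\lambda))$.

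Finally I would invoke part~3 of Theorem~\ref{thm:tanaka-toda}, whose hypothesis $\bar{T}\in\interior\co T(D_M)$ holds by Assumption~\ref{assump:int_T}; this yields $H(P_M;Q_M)=\max\eqref{eq:D}$. Chaining the two equalities gives $H(P_M;Q_M)=-\log(\min_{\lambda}J_M(\lambda))$, as claimed. The result is thus a direct algebraic reformulation of the duality already in hand, and there is no substantial obstacle; the only point worth a word is that the minimum of $J_M$ is genuinely attained (equivalently, that the dual maximizer $\lambda_M$ exists), which is again guaranteed by Theorem~\ref{thm:tanaka-toda}.
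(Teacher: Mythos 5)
Your proof is correct and is essentially the paper's own argument run in the opposite direction: the paper starts from $H(P_M;Q_M)=\max\eqref{eq:D}$ (Theorem~\ref{thm:tanaka-toda}) and absorbs $\ip{\lambda,\bar{T}}$ into the logarithm to arrive at $\max_\lambda[-\log J_M(\lambda)]=-\log(\min_\lambda J_M(\lambda))$, whereas you factor $\e^{-\ip{\lambda,\bar{T}}}$ out of $J_M$ to recover the dual objective and then invoke the same duality. No gap.
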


\begin{proof}
By Theorem \ref{thm:tanaka-toda} and the definition of $J_M$, we obtain
\begin{align*}
H(P_M;Q_M)&=\max_{\lambda\in\R^L} 
\left[\ip{\lambda,\bar{T}}-\log \left(\sum_{i=1}^{I_M} q_{i,M}\e^{\ip{ \lambda,T(x_{i,M}) } }\right)\right]&&(\because \text{\eqref{eq:D}, Theorem \ref{thm:tanaka-toda}})\\
&=\max_{\lambda\in\R^L} 
\left[-\log \left(\sum_{i=1}^{I_M} q_{i,M}\e^{\ip{ \lambda,T(x_{i,M})-\bar{T}} }\right)\right]&&(\because \sum q_{i,M}=1)\\
&=\max_{\lambda\in\R^L}[-\log (J_M(\lambda))]=-\log\left(\min_{\lambda\in\R^L}J_M(\lambda)\right),&&(\because \eqref{eq:Def_J})
\end{align*}
which is \eqref{eq:H_minJ}.
\end{proof}

By Lemma \ref{lem:H_minJ}, in order to estimate the Kullback-Leibler information $H(P_M;Q_M)$ from above, it suffices to bound $J_M(\lambda)$ from below.  To this end let
\begin{equation}
\Er{Q}_{T,M} = \norm{\int_{\R^{K}} f(x)T(x)\dd x -
\sum_{i=1}^{I_M} q_{i,M}T(x_{i,M})}
\label{eq:Err1_g_T} 
\end{equation}
be the initial approximation error for the moments $T$,
\begin{equation}
C_\alpha=\inf_{\lambda\in\R^L,\norm{\lambda}=1}\int_{\R^K}f(x)\left(\max\set{0,\min\set{\ip{\lambda,T(x)-\bar{T}},\alpha}}\right)^2\dd x
\label{eq:Def_Calpha}
\end{equation}
for any $\alpha>0$, and $C_\infty=\lim_{\alpha\to\infty}C_\alpha$.
The following lemma gives a quadratic lower bound of $J_M$.

\begin{lem}\label{lem:J_quad}
Let Assumptions \ref{assump:int_T} and \ref{assump:int_formula} be satisfied.  Then
\begin{enumerate}
\item $0<C_\alpha\le C_\infty\le \infty$.
\item For any $C$ with $0<C<C_\infty$, there exists a positive integer $M_C$ such that for any $M\ge M_C$, we have
\begin{equation}
J_M(\lambda)\ge 1-\Er{Q}_{T,M}\norm{\lambda}+\frac{1}{2}C\norm{\lambda}^2.\label{eq:J_quad}
\end{equation}
\end{enumerate}
\end{lem}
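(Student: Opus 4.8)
The plan is to treat the two assertions separately. The first is essentially a compactness-plus-positivity statement; the second rests on an elementary exponential inequality together with a uniform-over-directions strengthening of the weak convergence in Assumption~\ref{assump:int_formula}, and this uniformity is where the real work lies.

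For the first assertion, monotonicity is immediate: the clamp $\phi_\alpha(t):=\max\set{0,\min\set{t,\alpha}}$ is nondecreasing in $\alpha$ for each fixed $t$, so the integrand in \eqref{eq:Def_Calpha} increases with $\alpha$, giving $C_\alpha\le C_\infty\le\infty$. The substance is the strict positivity $C_\alpha>0$. Since the unit sphere $\set{\norm{\lambda}=1}$ is compact and $\lambda\mapsto\int f(x)\phi_\alpha(\ip{\lambda,T(x)-\bar{T}})^2\dd x$ is continuous (the integrand is continuous in $\lambda$ and dominated by $\alpha^2$, so dominated convergence applies), the infimum defining $C_\alpha$ is attained at some unit vector $\lambda^\ast$. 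Were it zero, then $\phi_\alpha(\ip{\lambda^\ast,T(x)-\bar{T}})=0$ for $f$-a.e.\ $x$, i.e.\ $\ip{\lambda^\ast,T(x)-\bar{T}}\le 0$ $f$-a.e.; but $\int f(x)\ip{\lambda^\ast,T(x)-\bar{T}}\dd x=\ip{\lambda^\ast,\bar{T}-\bar{T}}=0$, and a nonpositive function of zero mean must vanish $f$-a.e., so $\ip{\lambda^\ast,T(x)}=\ip{\lambda^\ast,\bar{T}}$ for $f$-a.e.\ $x$, contradicting the affine independence in Assumption~\ref{assump:int_T} (take $(\lambda,\mu)=(\lambda^\ast,-\ip{\lambda^\ast,\bar{T}})$). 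Hence $C_\alpha>0$.

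For the second assertion I would start from the elementary inequality $\e^t\ge 1+t+\frac12\phi_\beta(t)^2$, valid for every $t\in\R$ and every $\beta>0$ (check the regimes $t\le 0$, $0\le t\le\beta$, $t\ge\beta$ using $\e^t\ge 1+t$ and $\e^t\ge 1+t+t^2/2$ for $t\ge 0$). The key trick, which produces a genuinely quadratic bound for \emph{all} $\lambda$ rather than one that saturates at large $\norm{\lambda}$, is to let the truncation level scale with $\norm{\lambda}$. Writing $\lambda=r u$ with $r=\norm{\lambda}$ and $\norm{u}=1$ and applying the inequality with $\beta=r\alpha$, the homogeneity $\phi_{r\alpha}(r s)=r\,\phi_\alpha(s)$ yields, after summing against $q_{i,M}$,
\[
J_M(\lambda)\ge 1+r\ip{u,\Delta_M}+\frac12 r^2\sum_{i=1}^{I_M}q_{i,M}\,\phi_\alpha(\ip{u,T(x_{i,M})-\bar{T}})^2,
\]
where $\Delta_M=\sum_{i}q_{i,M}T(x_{i,M})-\bar{T}$ satisfies $\norm{\Delta_M}=\Er{Q}_{T,M}$. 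By Cauchy--Schwarz the linear term is at least $-\Er{Q}_{T,M}\norm{\lambda}$, so \eqref{eq:J_quad} reduces to showing that the directional second moment $F_M(u):=\sum_i q_{i,M}\phi_\alpha(\ip{u,T(x_{i,M})-\bar{T}})^2$ exceeds $C$ uniformly in unit $u$, for all $M$ large.

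This uniform lower bound is the main obstacle, because Assumption~\ref{assump:int_formula} only supplies, for each \emph{fixed} $u$, the convergence $F_M(u)\to\int f\,\phi_\alpha(\ip{u,T-\bar{T}})^2\ge C_\alpha$. I would upgrade this to a uniform statement by a finite-net argument on the compact sphere, and the point that makes it work is that the clamp tames the modulus of continuity in $u$: from $\abs{\phi_\alpha(a)^2-\phi_\alpha(b)^2}\le 2\alpha\min\set{\abs{a-b},\alpha}$ one gets
\[
\abs{F_M(u)-F_M(u')}\le 2\alpha\sum_{i=1}^{I_M}q_{i,M}\,\min\set{\norm{u-u'}\norm{T(x_{i,M})-\bar{T}},\alpha},
\]
whose summand is a \emph{bounded} continuous function of $x_{i,M}$. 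Hence by \eqref{eq:int_gconv} the right-hand side converges to $2\alpha\int f\,\min\set{\norm{u-u'}\norm{T-\bar{T}},\alpha}$, which is small once $\norm{u-u'}$ is small. Choosing such a $\delta$ and then $M$ large controls the oscillation of $F_M$ across each ball of radius $\delta$; combining this with fixed-direction convergence at the finitely many net points $u_k$ (where $F_M(u_k)\to\int f\,\phi_\alpha(\ip{u_k,T-\bar{T}})^2\ge C_\alpha>C$) gives $\inf_{\norm{u}=1}F_M(u)>C$ for all $M\ge M_C$, and feeding this back yields \eqref{eq:J_quad}. I expect the only delicate bookkeeping to be the interchange of the net, this direction-modulus estimate, and the weak limit; the $\min\set{\cdot,\alpha}$ truncation is precisely what renders that estimate bounded and thus amenable to Assumption~\ref{assump:int_formula}, circumventing any need to control the unbounded quantity $\sum_i q_{i,M}\norm{T(x_{i,M})-\bar{T}}$.
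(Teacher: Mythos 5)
Your proof is correct and rests on the same core devices as the paper's: monotonicity of the clamp plus affine independence for the positivity of $C_\alpha$, and the inequality $\e^z\ge 1+z+\frac{1}{2}\left(\max\set{0,\min\set{z,a}}\right)^2$ with the truncation level $a=\norm{\lambda}\alpha$ scaled to $\norm{\lambda}$, followed by Cauchy--Schwarz on the linear term. The one substantive difference is your handling of uniformity over directions, and it is an improvement. The paper's proof passes to the limit $C_{M,\alpha}(\lambda^*)\to\int f(x)\left(\max\set{0,\min\set{\ip{\lambda^*,T(x)-\bar{T}},\alpha}}\right)^2\dd x\ge C_\alpha$ and then chooses $M_C$ with $C_{M,\alpha}(\lambda^*)\ge C$ for $M\ge M_C$; as written this convergence is pointwise in the unit vector $\lambda^*$, whereas \eqref{eq:J_quad} needs a single $M_C$ valid for every $\lambda$ simultaneously. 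You correctly flag this as the delicate step and close it with a finite net on the compact sphere, using $\abs{\phi_\alpha(a)^2-\phi_\alpha(b)^2}\le 2\alpha\min\set{\abs{a-b},\alpha}$ so that the oscillation of the directional second moment is dominated by the $Q_M$-expectation of a \emph{bounded} continuous function of $x$, to which \eqref{eq:int_gconv} applies; this is exactly the right way to avoid the uncontrolled quantity $\sum_i q_{i,M}\norm{T(x_{i,M})-\bar{T}}$. Your contradiction argument for $C_\alpha>0$ (a nonpositive function with zero $f$-mean vanishes a.e., contradicting affine independence) is also a cleaner route than the paper's appeal to "positive on a set of positive measure and continuous in $\lambda$," though both implicitly use compactness of the unit sphere. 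In short: same skeleton, but your version makes rigorous a uniformity step the paper elides.
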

\begin{proof}
First we prove $0<C_\alpha\le C_\infty\le \infty$.  Since the integrand in \eqref{eq:Def_Calpha} is nonnegative and increasing in $\alpha>0$, clearly $C_\alpha\le C_\infty\le \infty$.  Due to the presence of the $\max$ and $\min$ operators, the integrand is positive if and only if $x\in \supp f$ and $\ip{\lambda,T(x)-\bar{T}}>0$.  Since the components of $T$ are affine independent on $\supp f$ (Assumption \ref{assump:int_T}) and
\begin{align*}
\bar{T}=\int_{\R^K} f(x)T(x)\dd x &\iff \int_{\R^K} f(x)(T(x)-\bar{T})\dd x=0\\
&\iff (\forall \lambda)~\int_{\R^K} f(x)\ip{\lambda,T(x)-\bar{T}}\dd x=0,
\end{align*}
for any $\lambda\neq 0$ there exists a region in $\supp f$ for which $\ip{\lambda,T(x)-\bar{T}}>0$.  Since $T$ is continuous, the integrand in \eqref{eq:Def_Calpha} is positive on a set with positive measure and continuous with respect to $\lambda$, so $C_\alpha>0$.

Next we prove \eqref{eq:J_quad}.  For this purpose, we use the inequality
\begin{equation}
\e^z\ge 1+z+\frac{1}{2}\left(\max\set{0,\min\set{z,a}}\right)^2\label{eq:temp.1}
\end{equation}
for any $z,a\in\R$.  \eqref{eq:temp.1} follows from
$$\max\set{0,\min\set{z,a}}=\begin{cases}
0,&(z\le 0~\text{or}~a\le 0)\\
z,&(0\le z\le a)\\
a,&(0\le a\le z)
\end{cases}$$
$\e^z\ge 1+z$ if $z\le 0$, and $\e^z\ge 1+z+\frac{1}{2}z^2$ if $z\ge 0$.

Let $z=\ip{\lambda,T(x_{i,M})-\bar{T}}$ and $a=\norm{\lambda}\alpha$ for $\alpha>0$ in the inequality \eqref{eq:temp.1}.  Then
$$\e^{\ip{\lambda,T(x_{i,M})-\bar{T}}}\ge 1+\ip{\lambda,T(x_{i,M})-\bar{T}}+\frac{1}{2}\left(\max\set{0,\min\set{\ip{\lambda,T(x_{i,M})-\bar{T}},\norm{\lambda}\alpha}}\right)^2.$$
Multiplying both sides by $q_{i,M}\ge 0$, letting $\lambda^*=\frac{\lambda}{\norm{\lambda}}$, and summing over $i$, we obtain
\begin{equation}
J_M(\lambda)\ge 1+\ip{\lambda,B_M}+\frac{1}{2}C_{M,\alpha}(\lambda^*)\norm{\lambda}^2,\label{eq:temp.2}
\end{equation}
where
\begin{align}
B_M&=\sum_{i=1}^{I_M} q_{i,M}(T(x_{i,M})-\bar{T}),\label{eq:Def_BM}\\
C_{M,\alpha}(\lambda^*)&=\sum_{i=1}^{I_M} q_{i,M}\left(\max\set{0,\min\set{\ip{\lambda^*,T(x_{i,M})-\bar{T}},\alpha}}\right)^2.\label{eq:Def_CMalpha}
\end{align}
Letting $M\to \infty$ in \eqref{eq:Def_CMalpha}, we obtain
\begin{align*}
C_{M,\alpha}(\lambda^*)&\to \int f(x)\left(\max\set{0,\min\set{\ip{\lambda^*,T(x)-\bar{T}},\alpha}}\right)^2\dd x&&(\because \text{Assumption \ref{assump:int_formula}})\\
&\ge C_\alpha. &&(\because \eqref{eq:Def_Calpha})
\end{align*}
Since $\lim_{\alpha\to \infty}C_\alpha=C_\infty>C>0$, we can take $\alpha>0$ large enough such that $C_\alpha>C$.  Since $\lim_{M\to\infty} C_{M,\alpha}(\lambda^*)\ge C_\alpha>C$, we can take $M_C$ such that for any $M\ge M_C$ we have $C_{M,\alpha}(\lambda^*)\ge C$.  Then by \eqref{eq:temp.2} and the Cauchy-Schwarz inequality, we obtain
$$J_M(\lambda)\ge 1-\norm{B_M}\norm{\lambda}+\frac{1}{2}C\norm{\lambda}^2.$$
Since $\norm{B_M}=\Er{Q}_{T,M}$ by \eqref{eq:Err1_g_T}, $\int_{\R^K}f(x)T(x)\dd x=\bar{T}$, and \eqref{eq:Def_BM}, we obtain the conclusion \eqref{eq:J_quad}.
\end{proof}

Combining the above lemmas, we obtain the following estimate of $\Er{P}_{g, M}$.

\begin{thm}
\label{thm:MainErrorEstimate}
Let Assumptions \ref{assump:int_T} and \ref{assump:int_formula} be satisfied and $g$ be a bounded continuous function.  Then, for any $C>0$ be with $0<C<C_\infty=\lim_{\alpha\to\infty}C_\alpha$, there exists a positive integer $M_C$ such that for any $M$ with $M \ge M_C$, we have
\begin{equation}
\Er{P}_{g,M}\le \Er{Q}_{g,M}+\norm{g}_\infty\sqrt{-2 \log \left(1-\frac{(\Er{Q}_{T,M})^2}{2C} \right)},
\label{eq:final_estim}
\end{equation}
where the left-hand side is interpreted as $\infty$ if the inside of logarithm is negative.
\end{thm}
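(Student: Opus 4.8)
The plan is to combine the three ingredients already assembled: the master error estimate \eqref{eq:errest1}, the variational identity for the Kullback-Leibler information in Lemma \ref{lem:H_minJ}, and the quadratic lower bound for $J_M$ in Lemma \ref{lem:J_quad}. The only genuine work is to minimize the quadratic lower bound explicitly and then to track the direction of the inequalities through the decreasing map $u\mapsto -\log u$.

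First I would fix $C$ with $0<C<C_\infty$ and take $M_C$ as in Lemma \ref{lem:J_quad}, so that for every $M\ge M_C$ and every $\lambda\in\R^L$ we have the bound $J_M(\lambda)\ge 1-\Er{Q}_{T,M}\norm{\lambda}+\frac{1}{2}C\norm{\lambda}^2$. Writing $t=\norm{\lambda}\ge 0$, the right-hand side is a convex quadratic in $t$ with positive leading coefficient $\frac{1}{2}C$, so its global minimum over $t\in\R$ is attained at $t^\star=\Er{Q}_{T,M}/C$ and equals $1-(\Er{Q}_{T,M})^2/(2C)$. Since $\Er{Q}_{T,M}\ge 0$ and $C>0$ force $t^\star\ge 0$, this unconstrained minimizer also realizes the minimum over the relevant range $t\ge 0$, and therefore
\[
\min_{\lambda\in\R^L}J_M(\lambda)\ge 1-\frac{(\Er{Q}_{T,M})^2}{2C}.
\]

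Next I would feed this lower bound into Lemma \ref{lem:H_minJ}. Because $-\log$ is decreasing, a lower bound on $\min_\lambda J_M(\lambda)$ yields an upper bound on $H(P_M;Q_M)$, namely
\[
H(P_M;Q_M)=-\log\Big(\min_{\lambda}J_M(\lambda)\Big)\le -\log\Big(1-\frac{(\Er{Q}_{T,M})^2}{2C}\Big),
\]
which is valid as long as the argument $1-(\Er{Q}_{T,M})^2/(2C)$ is positive. This is exactly the case singled out in the statement; when the argument is nonpositive the claimed bound is declared to be $\infty$ and there is nothing to prove. Substituting into \eqref{eq:errest1} and absorbing the factor $2$ under the root gives
\[
\Er{P}_{g,M}\le \Er{Q}_{g,M}+\norm{g}_\infty\sqrt{2H(P_M;Q_M)}\le \Er{Q}_{g,M}+\norm{g}_\infty\sqrt{-2\log\left(1-\frac{(\Er{Q}_{T,M})^2}{2C}\right)},
\]
which is precisely \eqref{eq:final_estim}.

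I do not expect a serious obstacle here: once Lemmas \ref{lem:H_minJ} and \ref{lem:J_quad} are in place, the theorem reduces to a one-line quadratic minimization followed by one application of the monotone map $-\log$. The only points demanding care are (i) verifying that the minimizer $t^\star$ of the quadratic is nonnegative, so that minimizing over all of $\R$ and over $t\ge 0$ coincide, and (ii) respecting the sign convention: since $-\log$ reverses inequalities, the lower bound on $J_M$ must be preserved as a lower bound throughout, and the degenerate regime $1-(\Er{Q}_{T,M})^2/(2C)\le 0$ must be dispatched by the $\infty$-convention rather than by manipulating an ill-defined logarithm.
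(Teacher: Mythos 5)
Your proposal is correct and follows essentially the same route as the paper: minimize the quadratic lower bound from Lemma \ref{lem:J_quad} to get $\min_\lambda J_M(\lambda)\ge 1-(\Er{Q}_{T,M})^2/(2C)$, pass this through $-\log$ via Lemma \ref{lem:H_minJ}, and substitute into \eqref{eq:errest1}, dispatching the case $(\Er{Q}_{T,M})^2\ge 2C$ by the $\infty$-convention exactly as the paper does. Your extra care about the nonnegativity of the minimizer $t^\star$ and the direction of the inequality under $-\log$ is sound but adds nothing beyond the paper's argument.
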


\begin{proof}
For notational simplicity let $E:=\Er{Q}_{T,M}$.  Minimizing the right-hand side of \eqref{eq:J_quad} analytically, we obtain
\begin{equation}
J_M(\lambda)\ge 1-\frac{E^2}{2C}.\label{eq:J_lb}
\end{equation}
If $E^2\ge 2C$, \eqref{eq:J_lb} is not tight since $J_M\ge 0$ always.  In this case, we obtain the trivial estimate $\Er{P}_{g,M}\le \infty$.  Otherwise,
\begin{align*}
\Er{P}_{g,M}&\le \Er{Q}_{g,M}+\norm{g}_\infty \sqrt{-2\log \left(\min_\lambda J_M(\lambda)\right)}&& (\because \eqref{eq:errest1}, \eqref{eq:H_minJ})\\
&\le \Er{Q}_{g,M}+\norm{g}_\infty \sqrt{-2\log(1-E^2/2C)}, &&(\because \eqref{eq:J_lb})
\end{align*}
which is \eqref{eq:final_estim}.
\end{proof}

Note that $\Er{P}_{g, M}$ is bounded by a formula consisting of $\Er{Q}_{g,M}$ and $\Er{Q}_{T,M}$, which are the errors of the initial discretization $Q_M$ for the functions $g$ and $T$.  Since both of them tend to zero as $M \to \infty$ by Assumption \ref{assump:int_formula}, it follows from \eqref{eq:final_estim} and $-\log(1-t)\approx t$ for small $t$ that
\begin{equation}
\Er{P}_{g, M} = \mathrm{O}
\left(
\max\set{ 
\Er{Q}_{g,M},\Er{Q}_{T,M}} \right) \quad (M \to \infty).
\label{eq:final_order}
\end{equation}
The equality \eqref{eq:final_order} shows that the error $E_{g,M}$ is at most of the same order as the error of the initial discretization.  Thus our method does not compromise the order of the error at the expense of matching moments.

Using Theorem \ref{thm:MainErrorEstimate}, we can prove our main result, the weak convergence of the approximating discrete distribution $P_{M}=\set{p_{i,M}}$ to $f$.

\begin{thm}
\label{thm:MainConv}
Let Assumptions \ref{assump:int_T} and \ref{assump:int_formula} be satisfied.
Then, for any bounded continuous function $g$, we have
\begin{align}
\lim_{M \to \infty} \sum_{i=1}^{I_M} p_{i,M}g(x_{i,M}) = \int_{\R^{K}} f(x)g(x)\dd x,
\label{eq:MainConv}
\end{align}
\ie, the discrete distribution $P_{M}$ weakly converges to the exact distribution $f$.
\end{thm}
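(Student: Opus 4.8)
The plan is to deduce the weak convergence directly from the error estimate \eqref{eq:final_estim} of Theorem \ref{thm:MainErrorEstimate}, since weak convergence is by definition the statement that $\Er{P}_{g,M}\to 0$ as $M\to\infty$ for every bounded continuous $g$. Thus it suffices to show that the right-hand side of \eqref{eq:final_estim} vanishes in the limit. All of the analytic content has already been packed into Lemma \ref{lem:J_quad} and Theorem \ref{thm:MainErrorEstimate}, so the remaining task is a passage to the limit.

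First I would fix the free constant. By Lemma \ref{lem:J_quad}(1) we have $C_\infty>0$, so I can choose and fix some $C$ with $0<C<C_\infty$. Theorem \ref{thm:MainErrorEstimate} then supplies a threshold $M_C$ such that \eqref{eq:final_estim} holds for all $M\ge M_C$. Next I would invoke Assumption \ref{assump:int_formula}: relation \eqref{eq:int_gconv} gives $\Er{Q}_{g,M}\to 0$, while relation \eqref{eq:int_Tconv} gives $\Er{Q}_{T,M}\to 0$. Because $\Er{Q}_{T,M}\to 0$, for all sufficiently large $M$ we have $(\Er{Q}_{T,M})^2<2C$, so the argument $1-(\Er{Q}_{T,M})^2/(2C)$ of the logarithm in \eqref{eq:final_estim} is positive and the bound is finite; that is, we are eventually outside the trivial case where the right-hand side is $\infty$. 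Passing to the limit, this argument tends to $1$, hence $-\log(1-(\Er{Q}_{T,M})^2/(2C))\to 0$ and the whole square-root term tends to $0$. Combined with $\Er{Q}_{g,M}\to 0$, the right-hand side of \eqref{eq:final_estim} tends to $0$, forcing $\Er{P}_{g,M}\to 0$, which is exactly \eqref{eq:MainConv}.

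The only delicate point, and the step I would be most careful about, is the ordering of the quantifiers: the constant $C$ must be fixed \emph{before} the threshold $M_C$ is produced, so rather than trying to send $C\to C_\infty$ simultaneously with $M\to\infty$ I would hold $C$ fixed and take $\limsup_{M\to\infty}$ of \eqref{eq:final_estim}. Since the resulting upper bound on $\limsup_M \Er{P}_{g,M}$ is already $0$ for this single fixed $C$, no further optimization over $C$ is needed and the argument closes. I do not anticipate any genuine obstacle beyond this bookkeeping, as the final step is a routine limit computation.
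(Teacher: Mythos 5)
Your proposal is correct and follows essentially the same route as the paper: the authors also deduce \eqref{eq:MainConv} by combining the error bound of Theorem \ref{thm:MainErrorEstimate} (in the form of the order estimate \eqref{eq:final_order}) with Assumption \ref{assump:int_formula}, which sends both $\Er{Q}_{g,M}$ and $\Er{Q}_{T,M}$ to zero. Your explicit attention to fixing $C$ before obtaining $M_C$ and to eventually escaping the trivial infinite bound merely spells out details the paper leaves implicit.
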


\begin{proof}
By the definition of $\Er{P}_{g,M}$ in \eqref{eq:Perror}, \eqref{eq:final_order}, and Assumption \ref{assump:int_formula}, we get
$$\abs{\int_{\R^{K}} f(x)g(x)\dd x-\sum_{i=1}^{I_M} p_{i,M}g(x_{i,M})}=\Er{P}_{g,M}=\mathrm{O}\left(\max\set{\Er{Q}_{g,M},\Er{Q}_{T,M}} \right)\to 0$$
as $M\to\infty$, which is \eqref{eq:MainConv}.
\end{proof}

The following theorem gives a tighter error estimate when $\Er{Q}_{T,M}$ is large.

\begin{thm}\label{thm:alt_error}
Let everything be as in Theorem \ref{thm:MainErrorEstimate}.  Then
\begin{equation}
\Er{P}_{g,M}\le \Er{Q}_{g,M}+\norm{g}_\infty\frac{2}{\sqrt{C}}\Er{Q}_{T,M}.\label{eq:alt_estim}
\end{equation}
\end{thm}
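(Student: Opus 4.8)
The plan is to bound the Kullback--Leibler information $H(P_M;Q_M)$ more carefully than in Theorem~\ref{thm:MainErrorEstimate}. There the closed-form minimization produced $\min_\lambda J_M(\lambda)\ge 1-\frac{(\Er{Q}_{T,M})^2}{2C}$, which collapses to $0$ as $\Er{Q}_{T,M}\to\sqrt{2C}$ and makes the logarithm diverge, so it cannot yield a finite estimate for large $\Er{Q}_{T,M}$. Instead I would control $H$ by the norm of the dual optimizer $\lambda_M$ and then bound that norm separately. Writing $E:=\Er{Q}_{T,M}$, recall from Lemma~\ref{lem:H_minJ} and Theorem~\ref{thm:tanaka-toda} that $H(P_M;Q_M)=-\log J_M(\lambda_M)$, where $\lambda_M$ is the unique minimizer of $J_M$.

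First I would apply Jensen's inequality to the convex exponential and the probability weights $q_{i,M}$ appearing in \eqref{eq:Def_J}. Since $\sum_i q_{i,M}=1$ and $q_{i,M}\ge 0$,
\[
J_M(\lambda)=\sum_{i=1}^{I_M} q_{i,M}\e^{\ip{\lambda,T(x_{i,M})-\bar{T}}}\ge \e^{\ip{\lambda,B_M}},
\]
where $B_M=\sum_{i=1}^{I_M} q_{i,M}(T(x_{i,M})-\bar{T})$ is the vector from \eqref{eq:Def_BM}, which satisfies $\norm{B_M}=E$ by \eqref{eq:Err1_g_T}. Evaluating at $\lambda_M$ and taking $-\log$ gives $H(P_M;Q_M)\le -\ip{\lambda_M,B_M}\le \norm{\lambda_M}\,E$ by the Cauchy--Schwarz inequality, so $H$ is controlled linearly in $\norm{\lambda_M}$.

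Next I would bound $\norm{\lambda_M}$ using the quadratic lower bound \eqref{eq:J_quad} of Lemma~\ref{lem:J_quad}, which holds for all $\lambda$ once $M\ge M_C$. Because $\lambda_M$ minimizes $J_M$ we have $J_M(\lambda_M)\le J_M(0)=1$, and substituting this into \eqref{eq:J_quad} gives $1\ge 1-E\norm{\lambda_M}+\frac12 C\norm{\lambda_M}^2$, hence $\norm{\lambda_M}\le 2E/C$ (the degenerate case $\lambda_M=0$ forces $H=0$ and is trivial). Combining the two steps yields $H(P_M;Q_M)\le \norm{\lambda_M}\,E\le 2E^2/C$, so $\sqrt{2H(P_M;Q_M)}\le \frac{2}{\sqrt{C}}E$, and inserting this into the basic estimate \eqref{eq:errest1} gives \eqref{eq:alt_estim}. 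The main obstacle is conceptual rather than computational: one must resist reusing the sharp quadratic minimum $1-E^2/(2C)$ (the very quantity that makes the estimate of Theorem~\ref{thm:MainErrorEstimate} blow up) and instead split the argument into a Jensen bound that is linear in $\norm{\lambda_M}$ together with an independent a priori bound on $\norm{\lambda_M}$; once this decomposition is found, each individual step is routine.
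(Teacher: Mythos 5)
Your proposal is correct and follows essentially the same route as the paper's proof: the Jensen bound $J_M(\lambda_M)\ge \e^{\ip{\lambda_M,B_M}}$ is exactly the paper's use of the convexity of $-\log(\cdot)$, and the a priori bound $\norm{\lambda_M}\le 2\Er{Q}_{T,M}/C$ via $J_M(\lambda_M)\le J_M(0)=1$ and \eqref{eq:J_quad} is identical. No gaps.
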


\begin{proof}
By \eqref{eq:errest1} it suffices to show
\begin{equation}
H(P_M;Q_M)\le \frac{2}{C}(\Er{Q}_{T,M})^2.\label{eq:H_estim}
\end{equation}
Let $\lambda_M\in\R^L$ be the solution to the dual problem \eqref{eq:D}.  By Theorem \ref{thm:tanaka-toda}, we have
\begin{align}
H(P_M;Q_M)&=-\log \left(\sum_{i=1}^{I_M} q_{i,M}\e^{\ip{\lambda_M,T(x_{i,M})-\bar{T}}}\right)\label{eq:H_ub}\\
&\le -\sum_{i=1}^{I_M}q_{i,M}\log \left(\e^{\ip{\lambda_M,T(x_{i,M})-\bar{T}}}\right)&&(\because \text{$-\log(\cdot)$ is convex})\notag\\
&=-\ip{\lambda_M,\sum_{i=1}^{I_M}q_{i,M}T(x_{i,M})-\bar{T}} && (\because \sum q_{i,M}=1)\notag\\
&\le \norm{\lambda_M}\Er{Q}_{T,M}.&&(\because \text{Cauchy-Schwarz, \eqref{eq:Err1_g_T}})\notag
\end{align}
Since $\lambda_M$ solves \eqref{eq:D}, and hence minimizes $J_M$, we obtain
\begin{align*}
\lambda_M&=\argmin_{\lambda\in\R^L}J_M(\lambda)\subset \set{\lambda \mid J_M(\lambda)\le J_M(0)}\\
&\subset \set{\lambda \left| 1-\Er{Q}_{T,M}\norm{\lambda}+\frac{1}{2}C\norm{\lambda}^2\le 1\right.}. &&(\because \eqref{eq:J_quad}, J_M(0)=1)
\end{align*}
Solving the inequality, we obtain $\norm{\lambda_M}\le \frac{2\Er{Q}_{T,M}}{C}$.  \eqref{eq:H_estim} follows from \eqref{eq:H_ub}.
\end{proof}

Since $-\log(1-t)\approx t<2t$ for small $t>0$, the error bound \eqref{eq:final_estim} is tighter than \eqref{eq:alt_estim} for large $M$ by setting $t=\frac{(\Er{Q}_{T,M})^2}{2C}$.

Theorems \ref{thm:MainErrorEstimate} and \ref{thm:alt_error} show that the order of the theoretical error of our approximation method is at most that of the initial quadrature formula, but does not say that the error actually improves.  Next we show that our method improves the accuracy of the integration in some situation.

Assume that the density $f$ has a compact support.  Let $T(x)=(T_1(x),\dots,T_L(x))$ be the moment defining function, which are bounded on $\supp f$.  Consider approximating the integrand $g$ using the components of $T$ as basis functions:
$$g(x)\approx b_{g,T}(x)=\sum_{l=1}^L \beta_lT_l(x),$$
where $\beta_1,\dots,\beta_L$ are coefficients.  Let
\begin{equation}
r_{g,T}=\frac{g(x)-b_{g,T}(x)}{\norm{g-b_{g,T}}_{\infty}}\label{eq:g_res}
\end{equation}
be the normalized residual of the approximation.  Clearly $\norm{r_{g,T}}_\infty\le 1$.

\begin{thm}\label{thm:g_res}
Let everything be as in Theorem \ref{thm:alt_error} and $\supp f$ be compact.  Then
\begin{equation}
\Er{P}_{g,M}\le \norm{g-b_{g,T}}_\infty\left(\Er{Q}_{r_{g,T},M}+\frac{2}{\sqrt{C}}\Er{Q}_{T,M}\right).\label{eq:improved_error}
\end{equation}
\end{thm}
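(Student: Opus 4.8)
The plan is to use the one property of $P_M$ that the earlier error bounds never exploited: $P_M$ matches the moments $\bar T$ \emph{exactly}. Since $b_{g,T}=\sum_{l=1}^L\beta_l T_l$ is a linear combination of the components of $T$, the moment constraint $\sum_i p_{i,M}T(x_{i,M})=\bar T$ from \eqref{eq:P} gives
$$\sum_{i=1}^{I_M}p_{i,M}b_{g,T}(x_{i,M})=\sum_{l=1}^L\beta_l\sum_{i=1}^{I_M}p_{i,M}T_l(x_{i,M})=\sum_{l=1}^L\beta_l\bar{T}_l=\int_{\R^K}f(x)b_{g,T}(x)\dd x,$$
so $P_M$ integrates the $T$-spanned part $b_{g,T}$ of $g$ with \emph{no} error at all. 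Recognizing this cancellation is the crux of the argument; everything else is bookkeeping.

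First I would split $g=b_{g,T}+(g-b_{g,T})$ and substitute into the definition \eqref{eq:Perror} of $\Er{P}_{g,M}$. By the identity above the $b_{g,T}$ contribution cancels, leaving the error of the residual alone:
$$\Er{P}_{g,M}=\abs{\int_{\R^K}f(x)(g-b_{g,T})(x)\dd x-\sum_{i=1}^{I_M}p_{i,M}(g-b_{g,T})(x_{i,M})}.$$
Next, since $g-b_{g,T}=\norm{g-b_{g,T}}_\infty\, r_{g,T}$ by the definition \eqref{eq:g_res} of the normalized residual, and the error functional on the right is absolutely homogeneous in its integrand, I would pull out the nonnegative scalar $\norm{g-b_{g,T}}_\infty$ to obtain $\Er{P}_{g,M}=\norm{g-b_{g,T}}_\infty\,\Er{P}_{r_{g,T},M}$, where $\Er{P}_{r_{g,T},M}$ denotes the obvious analogue of \eqref{eq:Perror} with $g$ replaced by $r_{g,T}$.

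Finally I would apply Theorem \ref{thm:alt_error} to the bounded continuous function $r_{g,T}$, which yields
$$\Er{P}_{r_{g,T},M}\le \Er{Q}_{r_{g,T},M}+\norm{r_{g,T}}_\infty\frac{2}{\sqrt{C}}\Er{Q}_{T,M}\le \Er{Q}_{r_{g,T},M}+\frac{2}{\sqrt{C}}\Er{Q}_{T,M},$$
the last inequality using $\norm{r_{g,T}}_\infty\le 1$; multiplying by $\norm{g-b_{g,T}}_\infty$ gives \eqref{eq:improved_error}. The step requiring care is not analytical but a matter of admissibility: the hypotheses that $\supp f$ is compact and that $T$ is bounded on it are what guarantee $r_{g,T}$ is a genuine bounded continuous integrand to which Theorem \ref{thm:alt_error} legitimately applies, and the degenerate case $g=b_{g,T}$ (where $r_{g,T}$ is formally $0/0$) must be read off directly, since there $\Er{P}_{g,M}=0$ equals the right-hand side. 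I expect verifying these admissibility conditions to be the only real obstacle, as the algebraic backbone of the proof is immediate once the exact-moment cancellation is in hand.
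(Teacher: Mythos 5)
Your proof is correct and follows essentially the same route as the paper: exploit exact moment matching to get $\Er{P}_{b_{g,T},M}=0$, reduce to the residual $g-b_{g,T}$, and apply Theorem \ref{thm:alt_error} to it. The only cosmetic difference is that you normalize by $\norm{g-b_{g,T}}_\infty$ before invoking Theorem \ref{thm:alt_error} while the paper normalizes afterward, which changes nothing by homogeneity.
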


\begin{proof}
Since the moments $T_1,\dots,T_L$ are exact under $P_M=\set{p_{i,M}}$, we have $\Er{P}_{b_{g,T},M}=0$.  Therefore by the triangle inequality we obtain
\begin{align*}
\Er{P}_{g,M}&\le \Er{P}_{g-b_{g,T},M}+\Er{P}_{b_{g,T},M}=\Er{P}_{g-b_{g,T},M}\\
&\le \Er{Q}_{g-b_{g,T},M}+\norm{g-b_{g,T}}_\infty \frac{2}{\sqrt{C}}\Er{Q}_{T,M}&& (\because \text{Theorem \ref{thm:alt_error}})\\
&=\norm{g-b_{g,T}}_\infty\left(\Er{Q}_{r_{g,T},M}+\frac{2}{\sqrt{C}}\Er{Q}_{T,M}\right),&&(\because \eqref{eq:g_res})
\end{align*}
which is \eqref{eq:improved_error}.
\end{proof}

A similar bound can be obtained if we apply Theorem \ref{thm:MainErrorEstimate} instead of \ref{thm:alt_error}.  Theorem \ref{thm:g_res} shows that by matching the moments $T(x)$, the error improves by the factor $\norm{g-b_{g,T}}_\infty$, which is the approximation error of the integrand $g$ by a linear combination of the moments $T(x)$.  

For example, suppose that the problem is one dimensional with $\supp f=[c,d]$ and we match the polynomial moments by setting the moment defining function $T_{l}(x) = x^{l}\ (l = 1, \ldots, L)$.  Then for $b_{g,T}$ above we can adopt the Chebyshev interpolating polynomial of $g$, which is an $L$-degree polynomial that coincides with $g$ at the points $x_{j} = \frac{c+d}{2} + \frac{d-c}{2} \cos(j \pi/L)$, where $j = 0,1,\dots, L$.  The Chebyshev interpolating polynomial can be easily computed and is known to be a nearly optimal approximating polynomial of a continuous function on a finite interval \cite[Ch.~16]{trefethen2013}.\footnote{Although the theoretical error estimate of the Chebyshev interpolation is well-known \cite[Ch.~7]{trefethen2013}, we do not use the estimate in the next section because it is not so tight to explain the improvement by our method when $L$ is small.  Instead we use the actual computed values of the error $\norm{g - b_{g,T}}_{\infty}$ as an improvement factor.}

\section{Numerical experiments}
\label{sec:Num}

In this section, we present some numerical examples that compare the accuracy of the approximate expectations computed by an initial quadrature formula and its modifications by our proposed method.  
All computations in this section are done by MATLAB programs with double precision floating point arithmetic on a PC.

\subsection{Beta and uniform distributions}

For simplicity, we consider continuous probability distributions on the finite interval $[0,1]$, specifically the beta distributions and the uniform distribution.  The exact density function of the beta distribution is
$$f(x) = x^{a-1} (1-x)^{b-1}/B(a,b),$$
where $B(\, \cdot\, ,\, \cdot\, )$ is the beta function.  We set $(a,b) = (1,3)$ and $(2,4)$ in the following. 

\subsubsection{Initial discretization and implementation}
We adopt the trapezoidal formula and Simpson's formula as initial quadrature formulas.  Consider the discrete set $D_M = \set{ mh_{M} \mid m = 0, 1, \dots, 2M}$, where $h_M=\frac{1}{2M}$ is the distance between the points and $M=1,2,\dots,12$.  The number of points is $I_M=2M+1$ and the integration points are $x_{i,M} = (i-1) h_{M}$, where $i=1,\dots,I_M$.  The integration weights are
\begin{align*}
&\text{Trapezoidal:} & w_{i, M} &= 
\begin{cases}
h_{M}, & (i \neq 1, I_M)\\
h_{M}/2, & (i = 1, I_M) 
\end{cases}\\
&\text{Simpson:} & w_{i, M} &= 
\begin{cases}
4h_{M}/3, & (i \neq 1, I_M \text{ and } i \text{ is even} )\\
2h_{M}/3, & (i \neq 1, I_M \text{ and } i \text{ is odd} )\\
h_{M}/3. & (i = 1, I_M)
\end{cases}
\end{align*}

Below, we compute the approximate expectation $\E[g(X)]$ of a test function $g(x)$ using eight formulas: the trapezoidal formula, Simpson's formula, and their modifications by our proposed method with exact polynomial moments $\E[X^{l}]$ up to 2nd order $(l=1,2)$, 4th order $(l=1,\dots,4)$, and 6th order $(l=1,\dots, 6)$.  

We numerically solve the dual problem \eqref{eq:D} as follows.  First, note that in order for \eqref{eq:P} to have a solution, it is necessary that there are at least as many unknown variables ($p_{i,M}$'s, so in total $I_M$) as the number of constraints ($L$ moment constraints and $+1$ for probabilities to add up to 1, so $L+1$).  Thus we need $I_M\ge L+1$.\footnote{Since the beta density is zero at $x=0,1$, which are included in $x_{i,M}$'s, we necessarily have $p(x_{i,M})=0$ for $i=1,I_M$.  Thus, the number of unknown variables is $I_M-2$, so we need $I_M-2\ge L+1\iff I_M\ge L+3$ in the case of the beta distribution.}  A sufficient condition for the existence of a solution is $\bar{T}\in\co T(D)$ (Theorem \ref{thm:tanaka-toda}), which can be easily verified in the current application.

Second, note that \eqref{eq:D} is equivalent to the minimization of $J_M(\lambda)$ in \eqref{eq:Def_J}, which is a strictly convex function of $\lambda$.  In order to minimize $J_{M}$, we apply a variant of the Newton-Raphson algorithm.  Starting with $\lambda_0=0$, we iterate
\begin{equation}
\lambda_{n+1}=\lambda_n-[\kappa I+\nabla^2 J_{M}(\lambda_n)]^{-1}\nabla J_{M}(\lambda_n)\label{eq:newton}
\end{equation}
over $n=0,1,\dotsc$, where $\kappa>0$ is a small number, $I$ is the $L$-dimensional identity matrix, and $\nabla J_{M}, \nabla^2 J_{M}$ denote the gradient and the Hessian of $J_{M}$.  Such an algorithm is advocated in \cite{luenberger-ye2008}.  The Newton-Raphson algorithm corresponds to setting $\kappa=0$ in \eqref{eq:newton}.  Since the Hessian $\nabla^2 J_{M}$ is often nearly singular, the presence of $\kappa>0$ stabilizes the iteration \eqref{eq:newton}.  Below we set $\kappa=10^{-7}$ and terminate the iteration \eqref{eq:newton} when $\norm{\lambda_{n+1}-\lambda_n}<10^{-10}$.

\subsubsection{Results for the beta distributions}\label{subsec:Num.1}
For the test function, we pick $g(x) = \e^{x}$ for $x \in [0,1]$.  
The exact expectations are $\E[g(X)] = 3(-5 + 2 \mathrm{e})$ for 
$X \sim \Be(1,3)$ and $\E[g(X)] = 20(49-18 \mathrm{e})$ for $X \sim \Be(2,4)$.

Figure \ref{fig:Beta} shows the results.  According to the figures, our proposed method excels the trapezoidal and Simpson's formula in the accuracy.  The errors basically decrease as the order of the matched moments increases, consistent with Theorem \ref{thm:g_res}. 
Note that the curves marked ``Theoretical'' in Figure \ref{fig:Beta} 
express not exact error estimate but only the order of the theoretical error ($\mathrm{O}(M^{-2})$ for trapezoidal and $\mathrm{O}(M^{-4})$ for Simpson), which is same for Figures \ref{fig:UniformTrap} and \ref{fig:UniformSimp} below.

\begin{figure}[htbp]
\centering
\subfigure[$X \sim \Be(1,3)$, Trapezoidal.]{\includegraphics[width=0.45\linewidth]{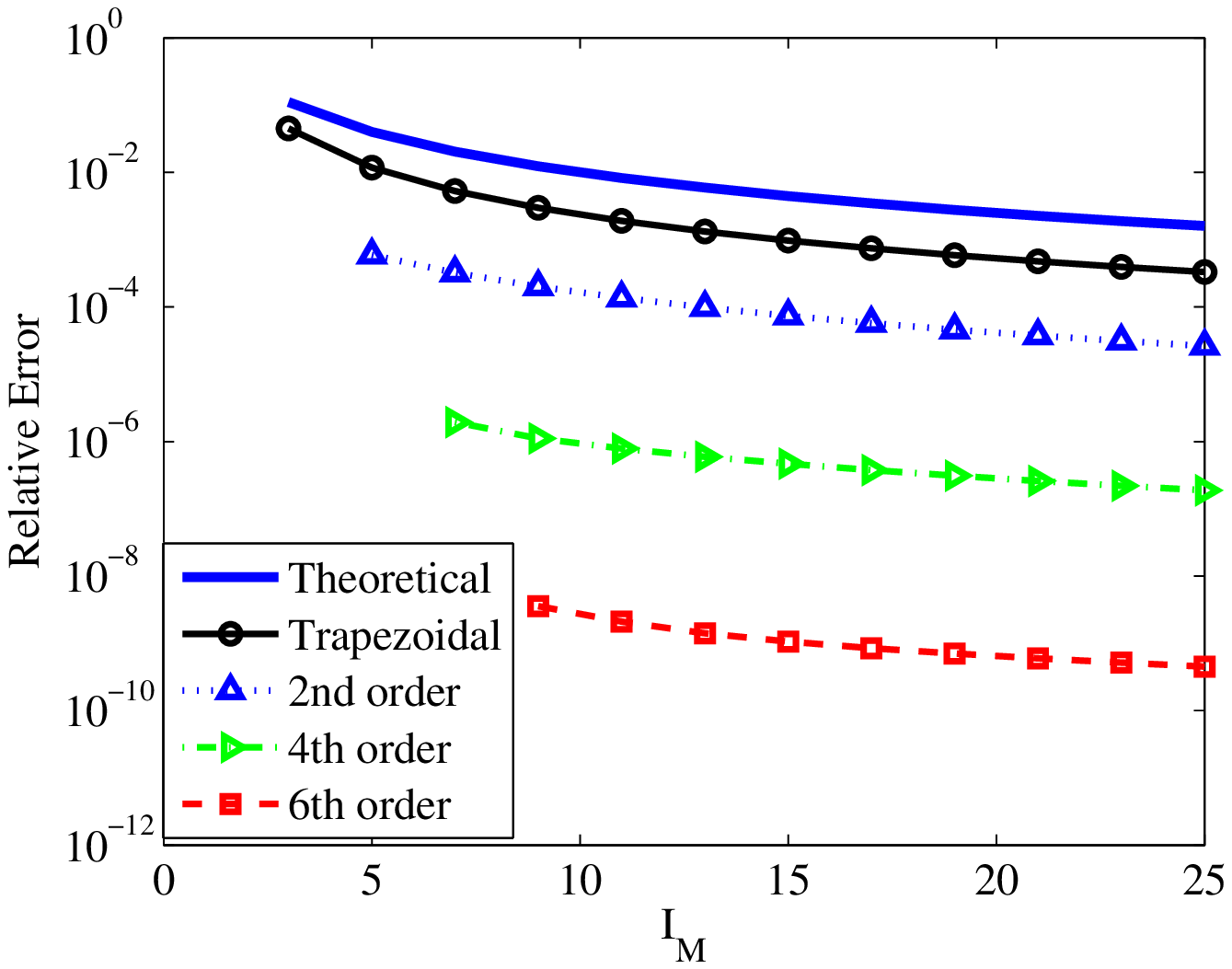}\label{fig:Beta13_T}}
\subfigure[$X \sim \Be(1,3)$, Simpson.]{\includegraphics[width=0.45\linewidth]{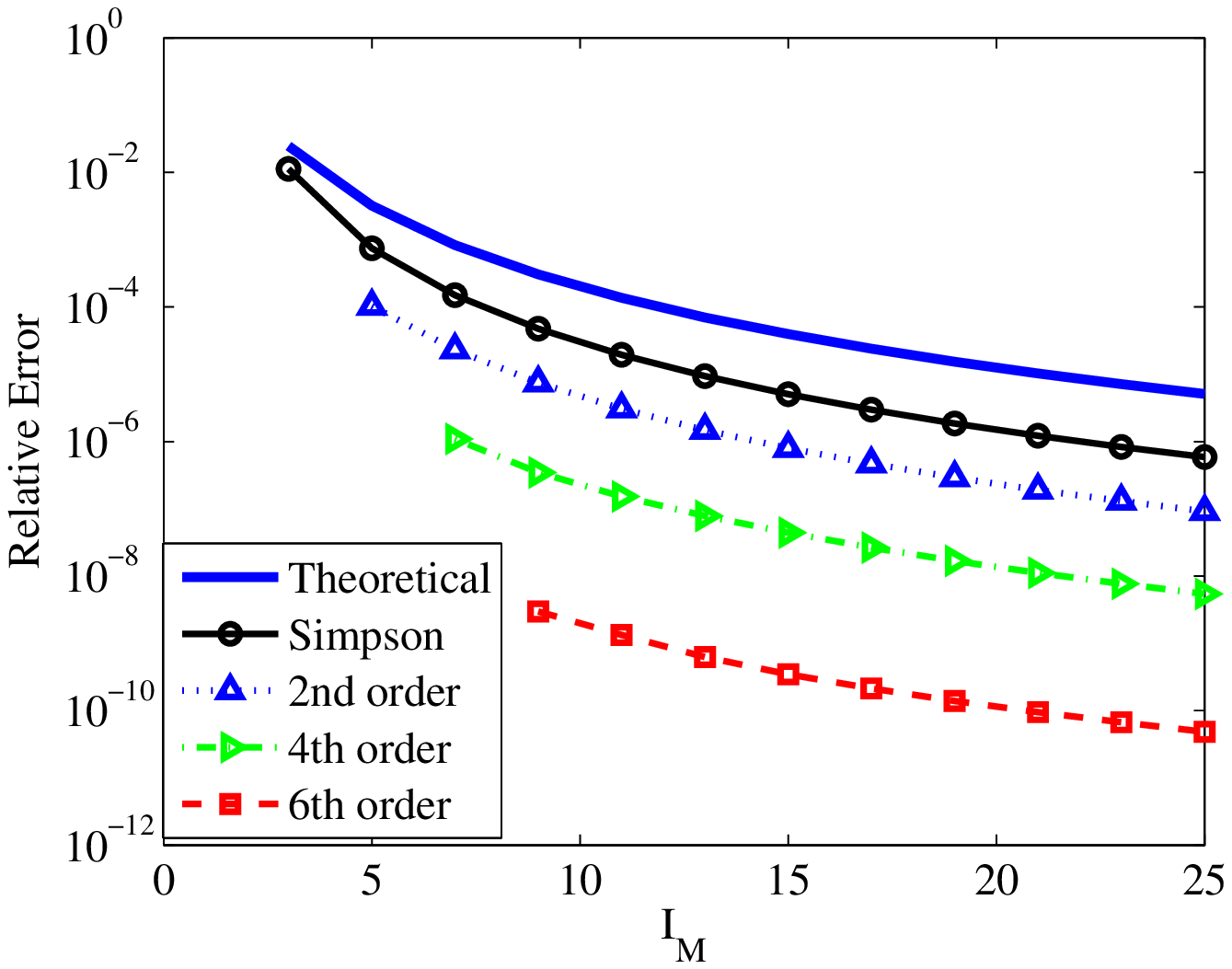}\label{fig:Beta13_S}}
\subfigure[$X \sim \Be(2,4)$, Trapezoidal.]{\includegraphics[width=0.45\linewidth]{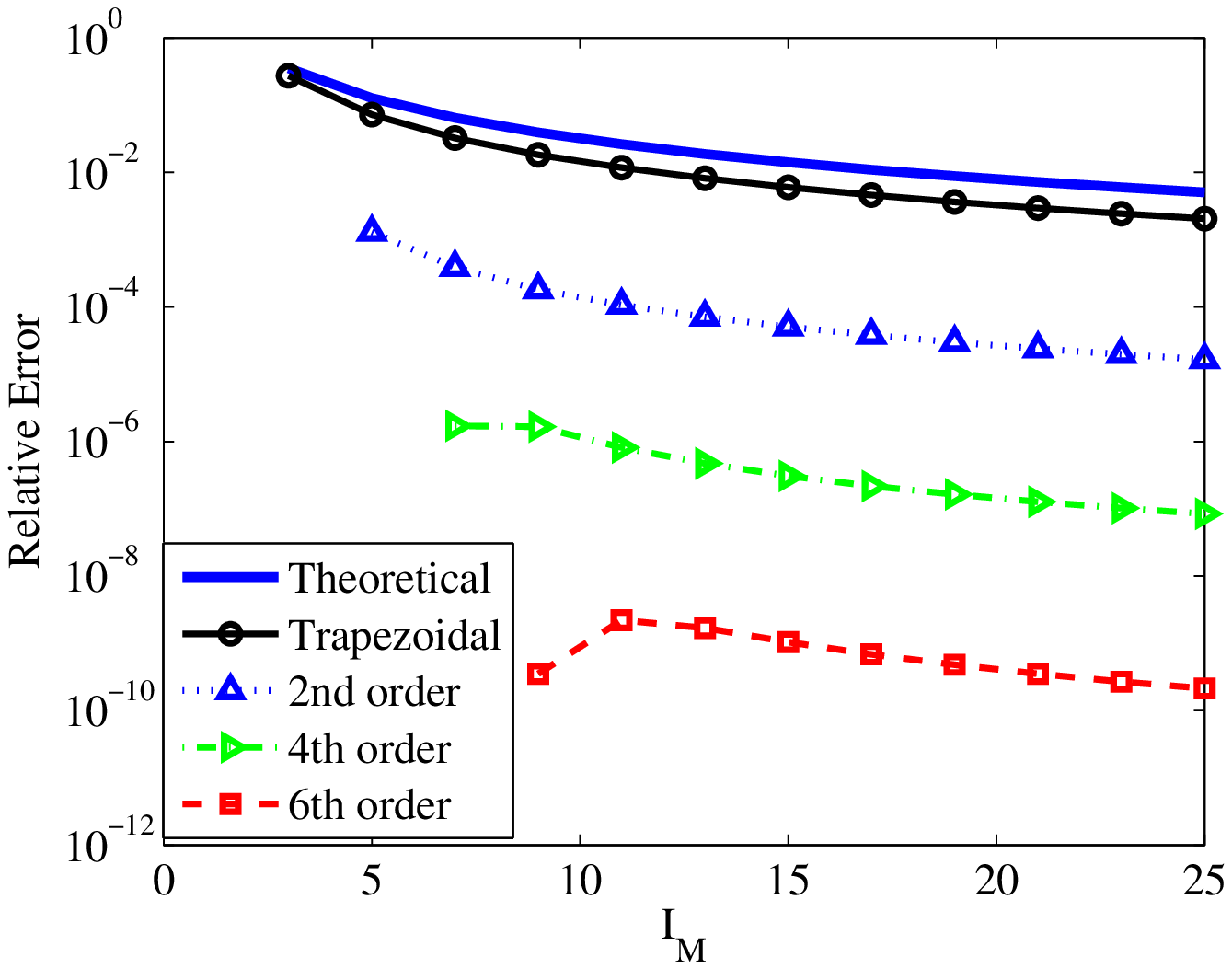}\label{fig:Beta24_T}}
\subfigure[$X \sim \Be(2,4)$, Simpson.]{\includegraphics[width=0.45\linewidth]{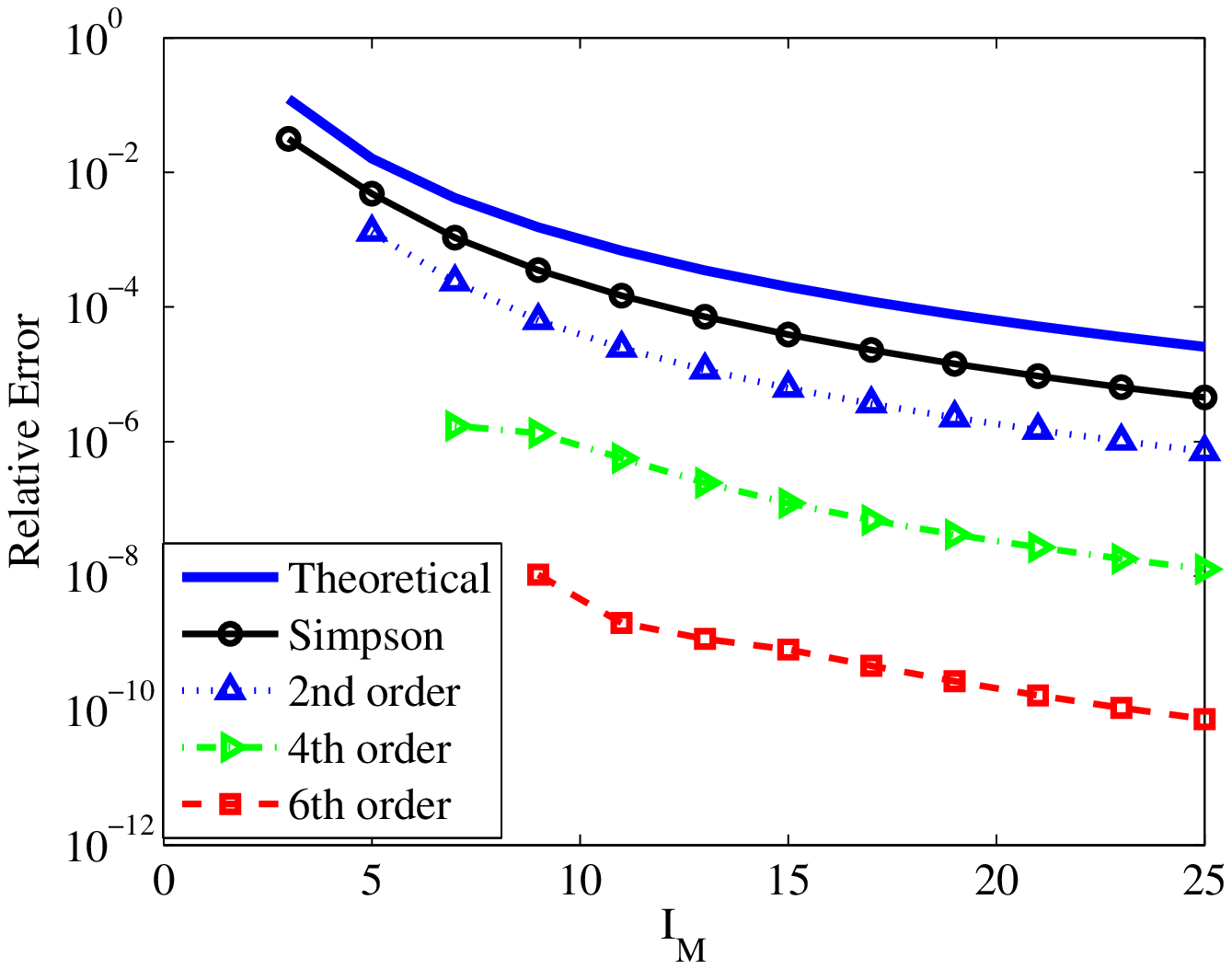}\label{fig:Beta24_S}}
\caption{Relative errors of the computed values of $\E[g(X)]$, where  $g(x)=\e^x$.  
The legends ``2nd order'', ``4th order'', and ``6th order'' represent those by our method with exact polynomial moments $\E[X^{l}]$ up to 2nd order $(l=1,2)$, 4th order $(l=1,\ldots, 4)$, and 6th order $(l=1,\ldots, 6)$, respectively.}\label{fig:Beta}
\subfigure[$\Be(1,3)$.]{\includegraphics[width=0.45\linewidth]{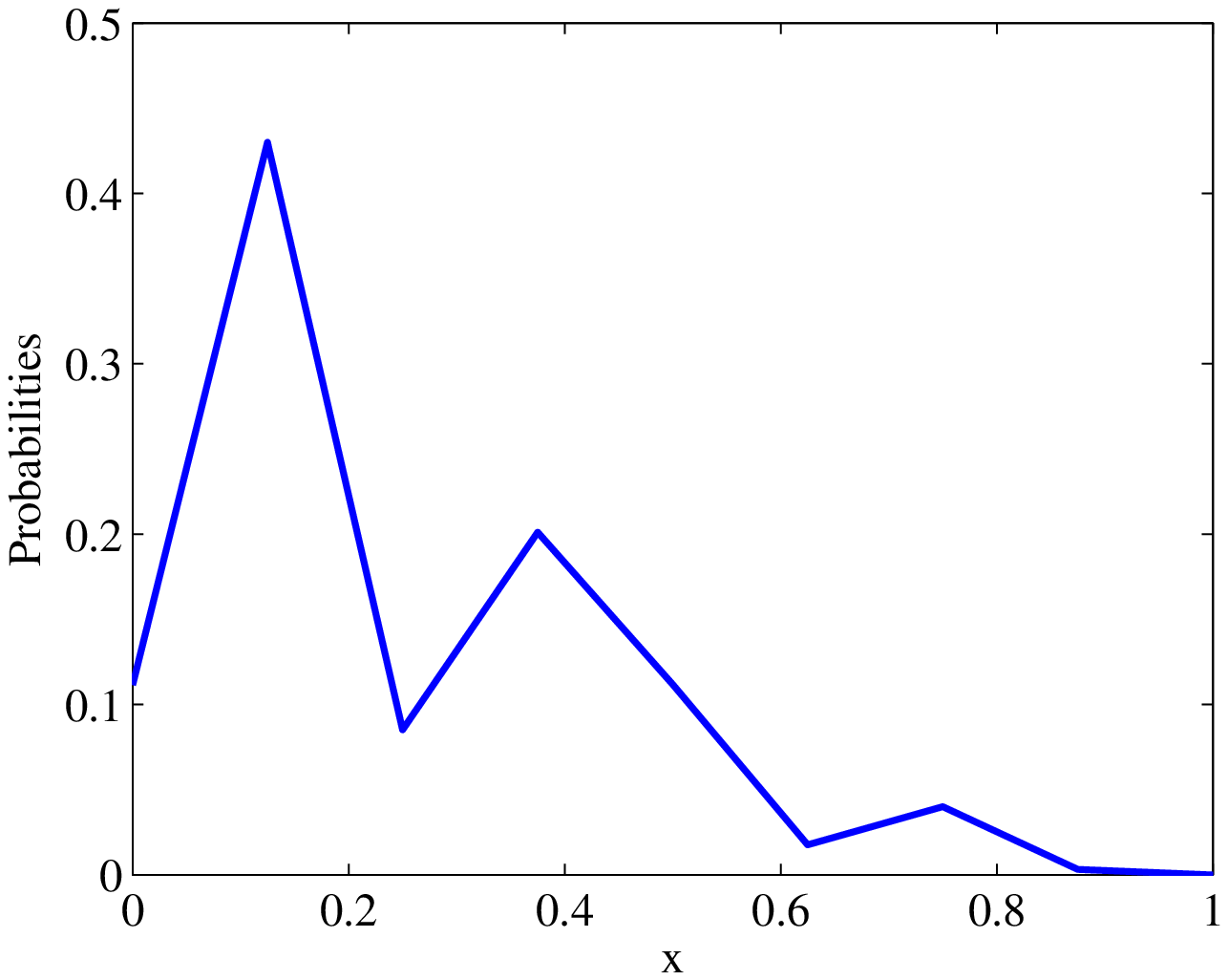}\label{fig:Beta13_Pathological}}
\subfigure[$\Be(2,4)$.]{\includegraphics[width=0.45\linewidth]{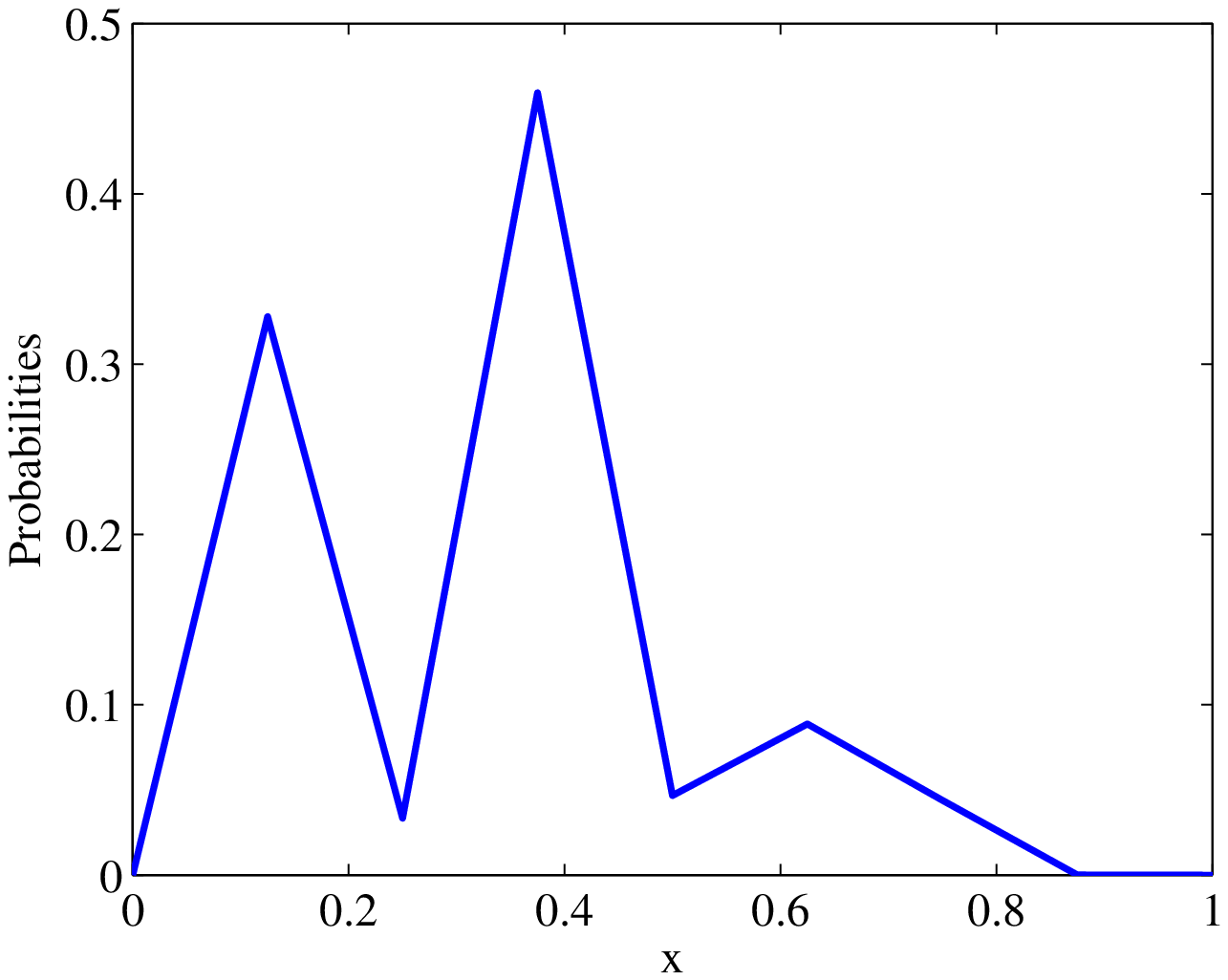}\label{fig:Beta24_Pathological}}
\caption{6th order discrete approximation from the trapezoidal formula with $I_M=9$ grid points.}\label{fig:pathological}
\end{figure}

The reason why the error curves for our proposed method are not necessarily parallel to those for the initial quadrature formula when $M$ is very small is because when the number of constraints $L+1$ is large relative to the number of unknown variables $I_M$, the method generates pathological probability distributions at the expense of matching many moments.  For instance, Figure \ref{fig:pathological} shows the discrete approximations of the beta distributions $\Be(1,3)$ and $\Be(2,4)$ with $L=6$ exact polynomial moments and $I_M=9$ grid points.  Clearly the discrete approximations do not resemble the continuous counterparts.  This pathological behavior is rarely an issue, however.  As long as there are twice as many grid points as constraints ($I_M\ge 2(L+1)$), the discrete approximation is well-behaved.

\subsubsection{Results for the uniform distribution}
We choose the test functions $g_1(x)=x^\frac{9}{2}$, $g_2(x)=\frac{1}{1+x}$, $g_3(x)=\sin(\pi x)$, and $g_4(x)=\log(1+x)$ for the uniform distribution on $[0,1]$.

Figures \ref{fig:UniformTrap} and \ref{fig:UniformSimp} show the numerical results.  In all cases, our method excels the initial quadrature formula.  
The improvement in the accuracy is significant (of the order $10^{-4}$ for the trapezoidal formula and $10^{-2}$ for Simpson's formula), consistent with Theorem \ref{thm:g_res}.  

\begin{figure}[htbp]
\centering
\subfigure[$g_1(x)=x^\frac{9}{2}$.]{\includegraphics[width=0.45\linewidth]{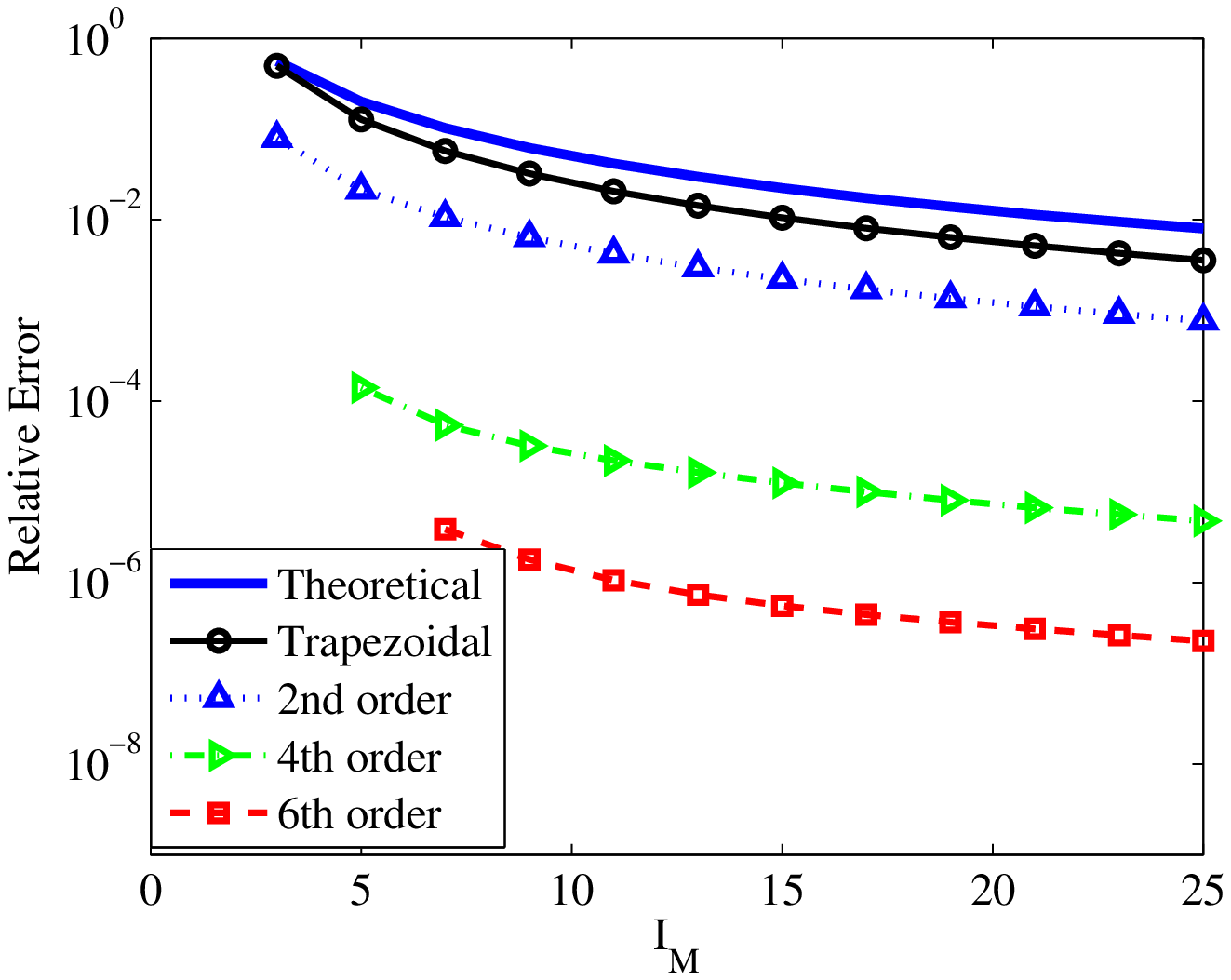}\label{fig:Err_Eg_Uni_Trap_x5ov2}}
\subfigure[$g_2(x)=\frac{1}{1+x}$.]{\includegraphics[width=0.45\linewidth]{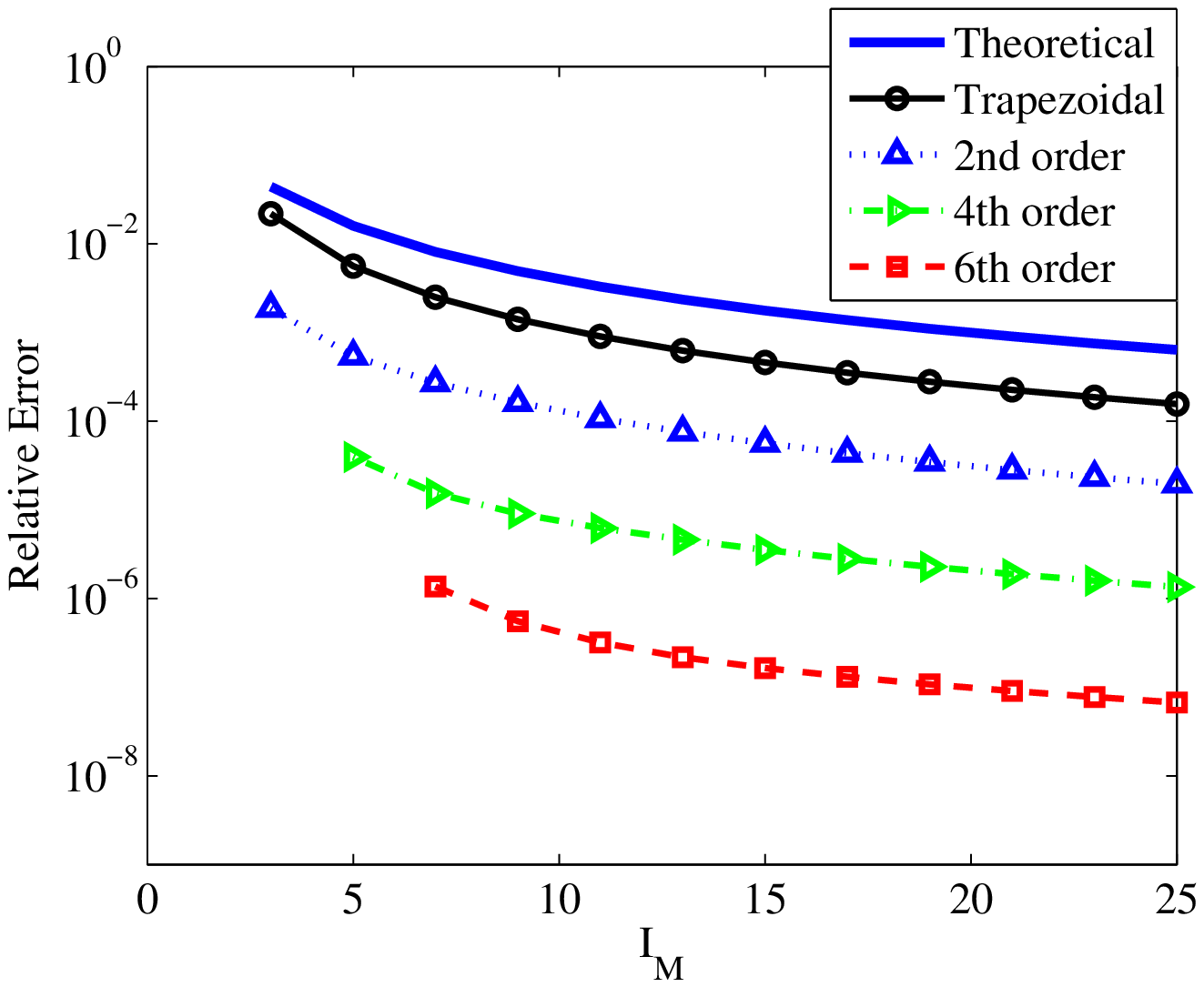}\label{fig:Err_Eg_Uni_Trap_Hyp}}
\subfigure[$g_3(x)=\sin(\pi x)$.]{\includegraphics[width=0.45\linewidth]{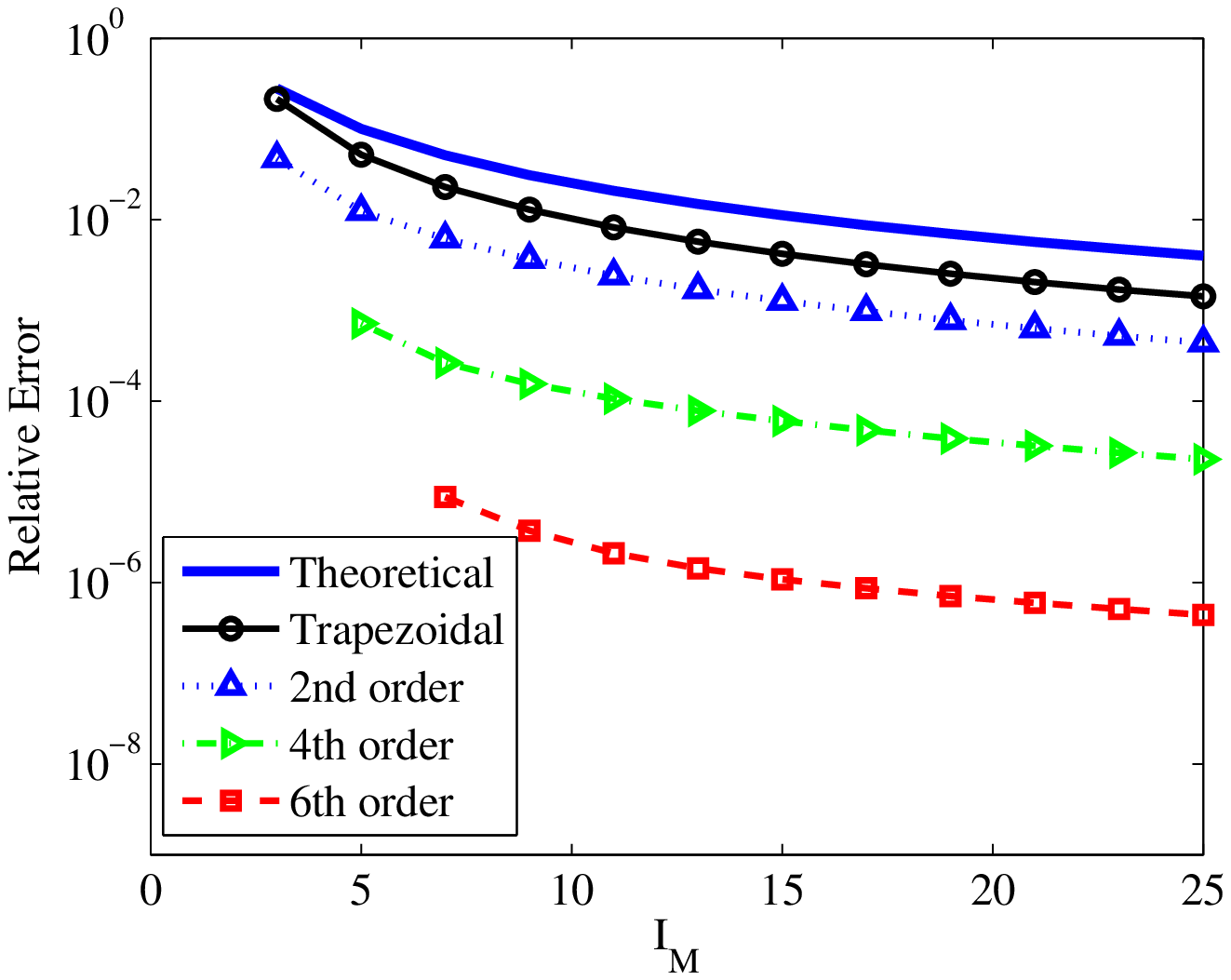}\label{fig:Err_Eg_Uni_Trap_Sin}}
\subfigure[$g_4(x)=\log(1+x)$.]{\includegraphics[width=0.45\linewidth]{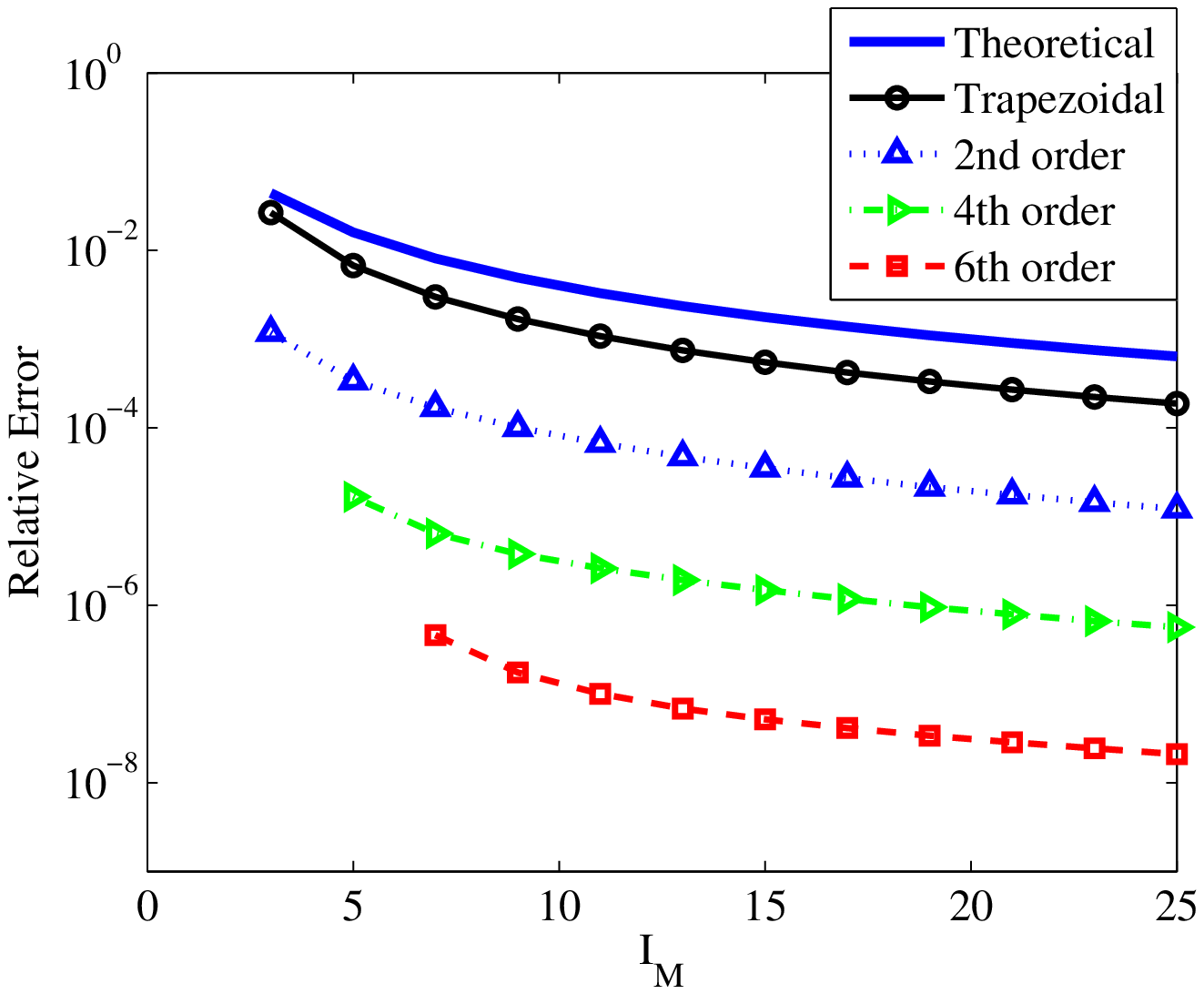}\label{fig:Err_Eg_Uni_Trap_Log}}
\caption{Relative errors of the computed values of $\int_0^1g(x)\dd x$.  The legends ``2nd order'', ``4th order'', and ``6th order'' represent those by our method with exact polynomial moments $\E[X^{l}]$ up to 2nd order $(l=1,2)$, 4th order $(l=1,\ldots, 4)$, and 6th order $(l=1,\ldots, 6)$, respectively.}\label{fig:UniformTrap}
\end{figure}

\begin{figure}[htbp]
\centering
\subfigure[$g_1(x)=x^\frac{9}{2}$.]{\includegraphics[width=0.45\linewidth]{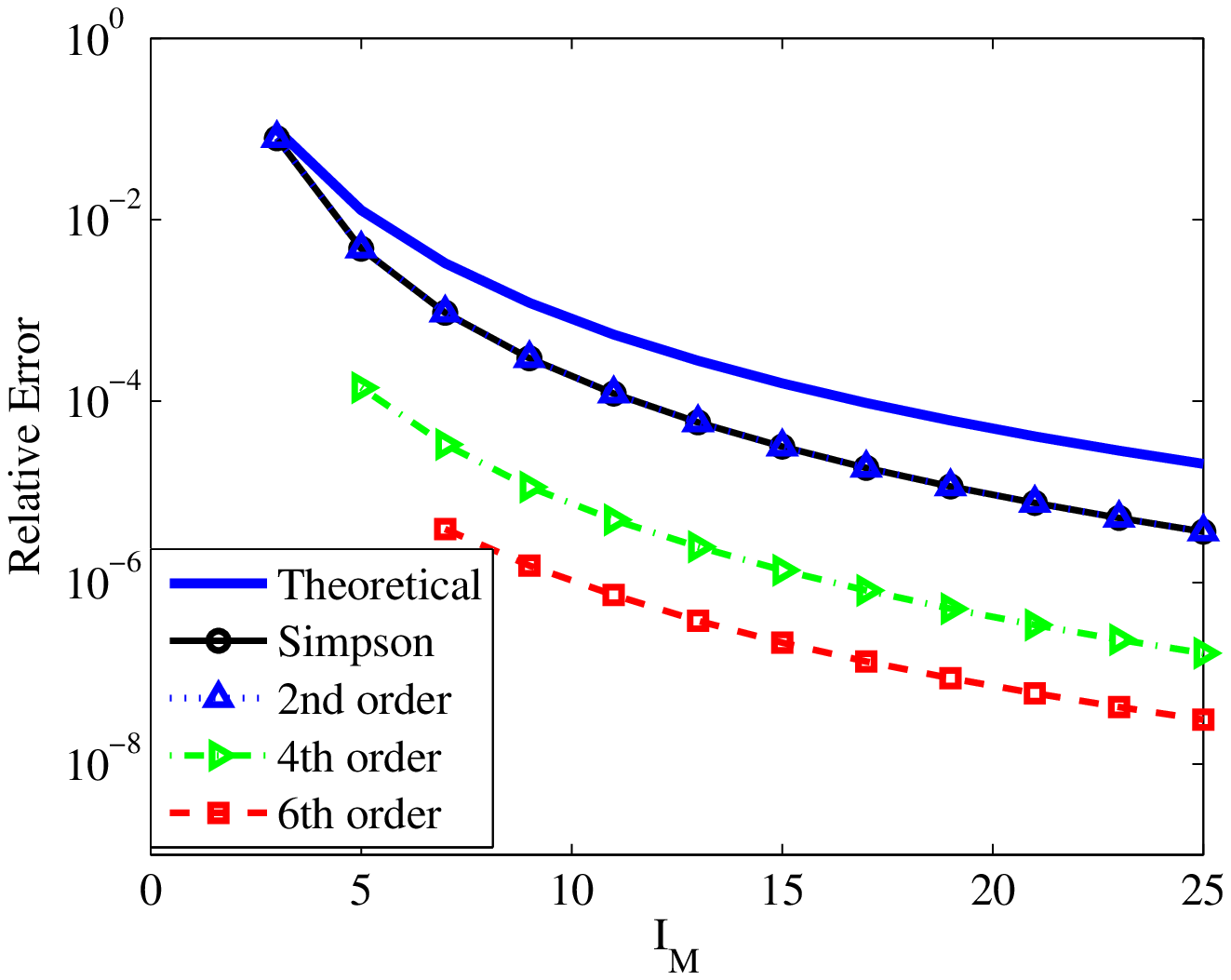}\label{fig:Err_Eg_Uni_Simp_x5ov2}}
\subfigure[$g_2(x)=\frac{1}{1+x}$.]{\includegraphics[width=0.45\linewidth]{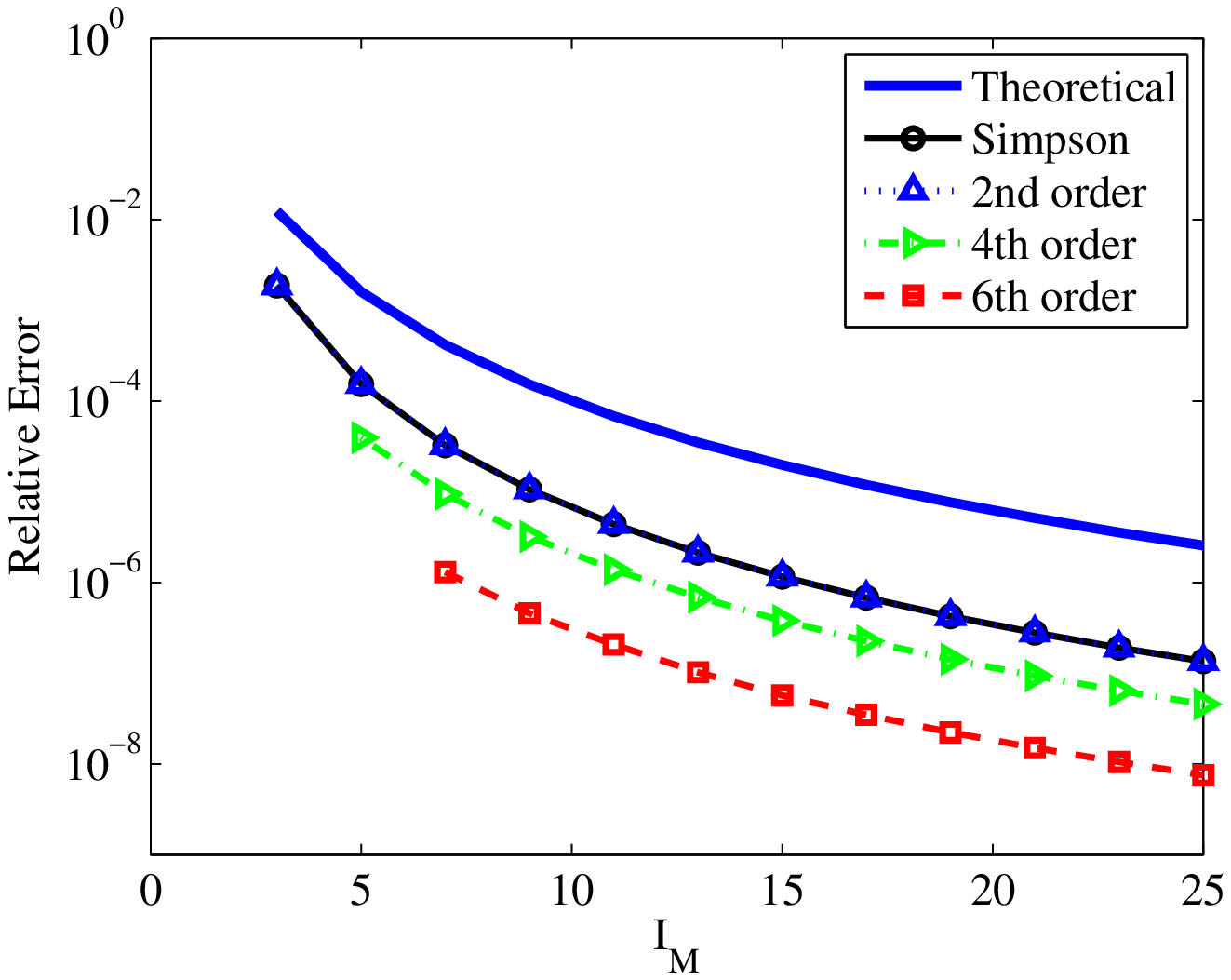}\label{fig:Err_Eg_Uni_Simp_Hyp}}
\subfigure[$g_3(x)=\sin(\pi x)$.]{\includegraphics[width=0.45\linewidth]{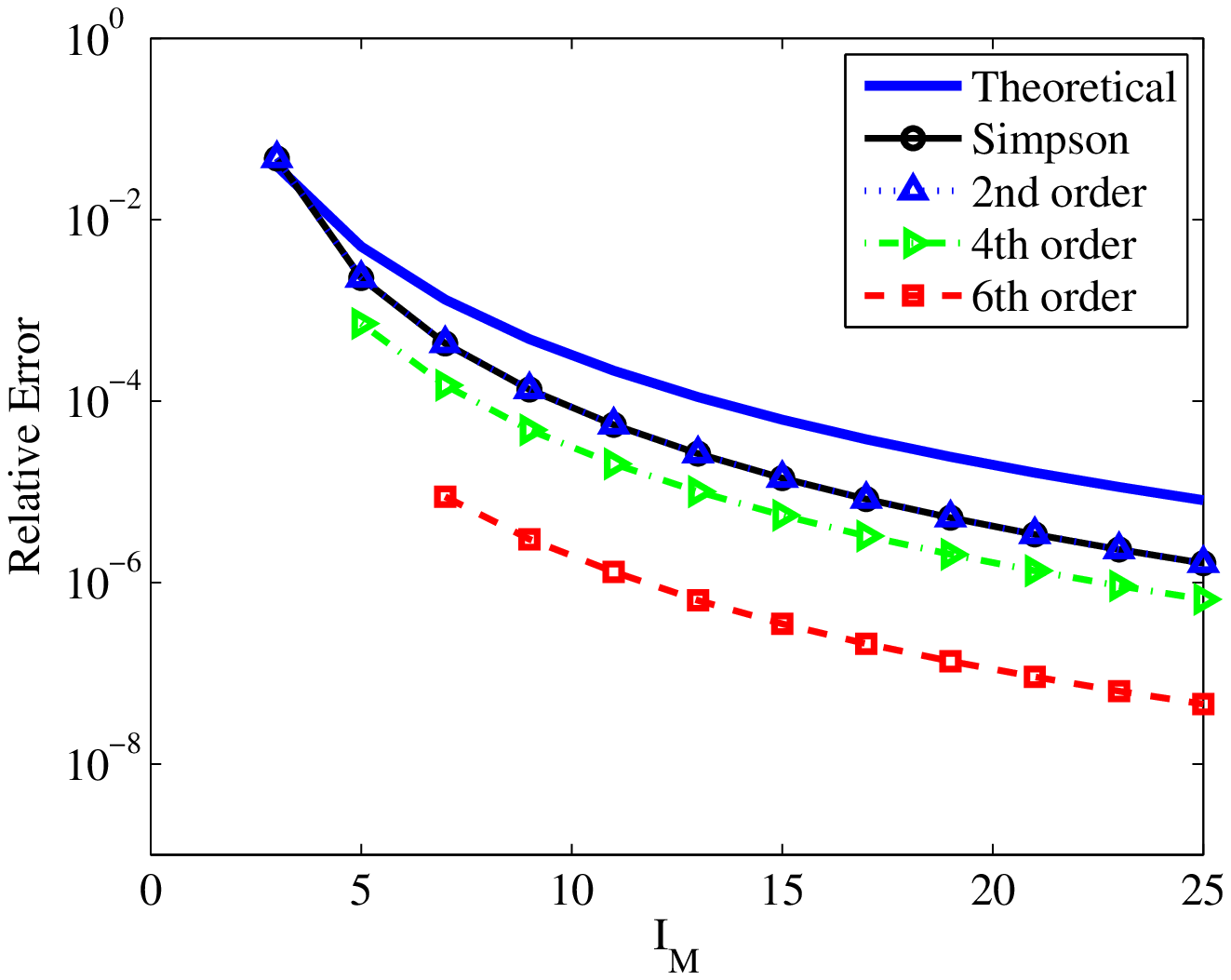}\label{fig:Err_Eg_Uni_Simp_Sin}}
\subfigure[$g_4(x)=\log(1+x)$.]{\includegraphics[width=0.45\linewidth]{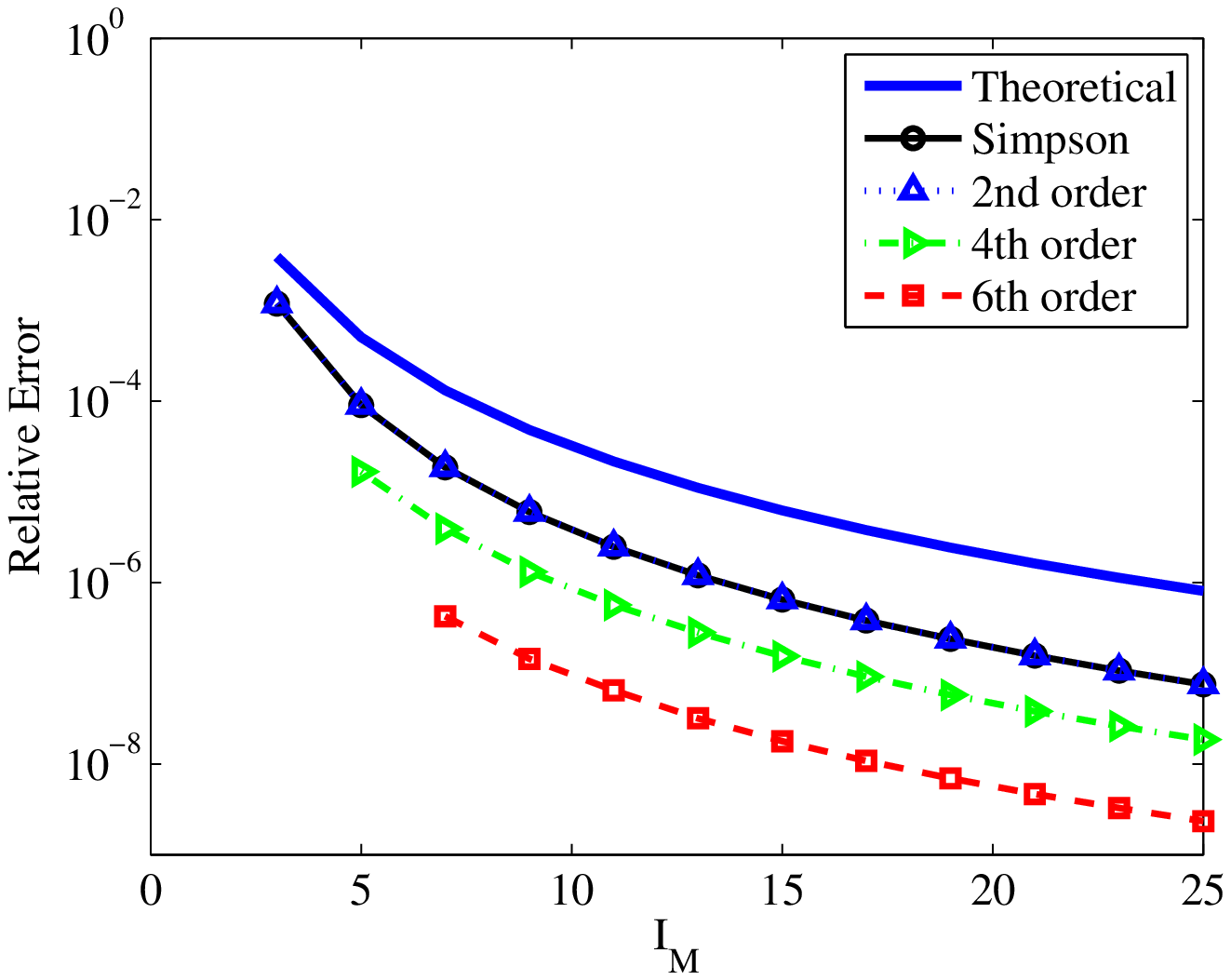}\label{fig:Err_Eg_Uni_Simp_Log}}
\caption{Relative errors of the computed values of $\int_0^1g(x)\dd x$.  The legends ``2nd order'', ``4th order'', and ``6th order'' represent those by our method with exact polynomial moments $\E[X^{l}]$ up to 2nd order $(l=1,2)$, 4th order $(l=1,\ldots, 4)$, and 6th order $(l=1,\ldots, 6)$, respectively.}\label{fig:UniformSimp}
\end{figure}

\subsubsection{Discussion}

According to Figures \ref{fig:Beta}, \ref{fig:UniformTrap}, and \ref{fig:UniformSimp}, the rate of convergence (the slope of the relative error with respect to the number of grid points $I_{M}$) is almost the same for the initial quadrature formula and our method.  Specifically, the curves of the relative error are quite similar to the translations of the graphs of the theoretical errors, which are $\mathrm{O}(M^{-2})$ for the trapezoidal formula and  $\mathrm{O}(M^{-4})$ for Simpson's formula when the integrands are in $C^{2}[0,1]$ and $C^{4}[0,1]$, respectively \cite[Ch.~2]{davis-rabinowitz1984}.
This observation is consistent with Theorem \ref{thm:g_res}, which shows that the error estimate is of the same order as the quadrature formula but improves by the error of the Chebyshev approximation.

Table \ref{tab:ErrCheb} shows the errors of the Chebyshev approximations of the functions $g, g_{1}, \dots, g_{4}$.  Since the trapezoidal formula does not even match the second moment, the theoretical improvement of our method (in $\log_{10}$) should be the numbers in Table \ref{tab:ErrCheb}.  The actual improvements in Figures \ref{fig:Beta} and \ref{fig:UniformTrap} are in line with these numbers.  Since Simpson's formula gives exact quadratures for 2nd order polynomials (hence our method with $L=2$ coincides with Simpson), the theoretical improvement of our method should be the \emph{difference} between the numbers in Table \ref{tab:ErrCheb} and those in the row corresponding to $L=2$.  For example, when we match $L=6$ moments, the improvement should be approximately $10^{-3}$, which is similar to what we observe in Figures \ref{fig:Beta} and \ref{fig:UniformSimp}. 


\begin{table}[htbp]
\begin{center}
\caption{Errors of the Chebyshev approximations of $g, g_{1}, \ldots, g_{4}$ in $\log_{10}$.}
\label{tab:ErrCheb}
\begin{tabular}{c | ccccc }
& \multicolumn{5}{c}{Test function}\\
Degree & $\e^{x}$ & $x^\frac{9}{2}$ & $(1+x)^{-1}$ & $\sin(\pi x)$ & $\log(1+x)$ \\
\hline
2 &  $-1.841$ &  $-0.847$  &  $-1.874$  &  $-1.251$ &  $-2.221$ \\
4 &  $-4.285$ &  $-2.869$  &  $-3.363$  &  $-3.031$ &  $-3.918$ \\
6 &  $-7.102$ &  $-5.048$  &  $-4.895$  &  $-4.872$ &  $-5.592$  
\end{tabular}
\end{center}
\end{table}

In summary, our method seems to be particularly suited for fine-tuning a quadrature formula with a small number of integration points. For instance, we can construct a highly accurate compound rule by subdividing the interval and applying our method to each subinterval.

\subsection{Optimal portfolio problem}\label{subsec:Num.2}
In this section we numerically solve the optimal portfolio problem briefly discussed in the introduction (see \cite{tanaka-toda-maxent-approx} for more details).  Suppose that there are two assets, stock and bond, with gross returns $R_1,R_2$.  Asset 1 (stock) is stochastic and lognormally distributed: $\log R_1\sim N(\mu,\sigma^2)$, where $\mu$ is the expected return and $\sigma$ is the volatility.  Asset 2 (bond) is risk-free and $\log R_2=r$, where $r$ is the (continuously compounded) interest rate.  
The optimal portfolio $\theta$ is determined by the optimization
\begin{equation}
U = \max_{\theta}\frac{1}{1-\gamma}\E[(R_1\theta+R_2(1-\theta))^{1-\gamma}],\label{optimalportfolio}
\end{equation}
where $\gamma>0$ is the relative risk aversion coefficient.  We set the parameters such that $\gamma=3$, $\mu=0.07$, $\sigma=0.2$, and $r=0.01$.  We numerically solve the optimal portfolio problem \eqref{optimalportfolio} in two ways, applying the trapezoidal formula and our proposed method.  (We also tried Simpson's method but it was similar to the trapezoidal method.)  To approximate the lognormal distribution, let $I_M=2M+1$ be the number of grid points ($M$ is the number of positive grid points) and $D_M=\set{mh\mid m=0,\pm 1,\dots,\pm M}$, where $h=1/\sqrt{M}$ is the grid size.  Let $p(x)$ be the approximating discrete distribution of $N(0,1)$ as before (trapezoidal or the proposed method with various moments).  Then we put probability $p(x)$ on the point $\e^{\mu+\sigma x}$ for each $x\in D_M$ to obtain the approximate stock return $R_1$.

Table \ref{t:optinvest} shows the optimal portfolio $\theta$ and its relative error for various moments $L$ and number of points $I_M=2M+1$.  The result is somewhat surprising.  Even with 3 approximating points ($M=1$), our proposed method derives an optimal portfolio that is off by only 0.5\% to the true value, whereas the trapezoidal method is off by 127\%.  While the proposed method virtually obtains the true value with 9 points ($M=4$, especially when the 4th moment is matched), the trapezoidal method still has 23\% of error.

\begin{table}[htbp]
\centering
\caption{Optimal portfolio and relative error for the trapezoidal method and our method.  $M$: number of positive grid points, $I_M=2M+1$: total number of grid points, $L$: maximum order of moments.}\label{t:optinvest}
\begin{tabular}{|cc|cc|cc|cc|}
\hline
\multicolumn{2}{|c|}{\# of grid points} & \multicolumn{2}{c|}{$L=0$ (trapezoidal)} & \multicolumn{2}{c|}{$L=2$} & \multicolumn{2}{c|}{$L=4$}\\ \hline
$M$ & $I_M$ & $\theta$ & Error (\%) & $\theta$ & Error (\%) & $\theta$ & Error (\%)\\ \hline
$1^2=1$ & 3 & 1.5155  & 127  & 0.6717 & 0.54  & - & - \\
$2^2=4$ & 9 & 0.8246  & 23.4  & 0.6694 & 0.20  & 0.6680 & $-0.015$ \\
$3^2=9$ & 19 & 0.6830 & 2.24  & 0.6684 & 0.044 & 0.6681 & 0 \\
$4^2=16$ & 33 & 0.6687 & 0.088 & 0.6682 & 0.015     & 0.6681 & 0 \\
$5^2=25$ & 51 & 0.6681 & 0		& 0.6681 & 0     & 0.6681 & 0 \\ \hline
\end{tabular}
\end{table}

The reason why the trapezoidal method gives poor results when the number of approximating points is small is because the moments are not matched.  To see this, taking the first-order condition for the optimal portfolio problem \eqref{optimalportfolio}, we obtain $\E[(\theta X+R_2)^{-\gamma}X]=0$, where $X=R_1-R_2$ is the excess return on the stock.  Using the Taylor approximation $x^{-\gamma}\approx a^{-\gamma}-\gamma a^{-\gamma-1}(x-a)$ for $x=\theta X+R_2$ and $a=\E[\theta X+R_2]$ and solving for $\theta$, after some algebra we get
$$\theta=\frac{R_2\E[X]}{\gamma\var[X]-\E[X]^2}.$$
Therefore the (approximate) optimal portfolio depends on the first and second moments of the excess return $X$.  Our method is accurate precisely because we match the moments.  In complex economic problems, oftentimes we cannot afford to use many integration points, in which case our method might be useful to obtain accurate results.

\section*{Acknowledgments}
We thank Jonathan Borwein, seminar participants at the 33rd International Workshop on Bayesian Inference and Maximum Entropy Methods in Science and Engineering (MaxEnt 2013) and 2014 Info-Metrics Institute Conference, and two anonymous referees for comments and feedback that greatly improved the paper.  KT was partially supported by JSPS KAKENHI Grant Number 24760064.

\appendix
\section{Proof of Pinsker's inequality}
This appendix proves Pinsker's inequality $\frac{1}{2}\norm{P-Q}_1^2\le H(P;Q)$, where $P=\set{p_n}_{n=1}^N$ and $Q=\set{q_n}_{n=1}^N$ are probability distributions and $\norm{\cdot}_1$ denotes the $L^1$ norm.  Let $N_+=\set{n \mid p_n>q_n}$, $N_-=\set{n \mid p_n\le q_n}$, $p=\sum_{n\in N_+}p_n$, and $q=\sum_{n\in N_+}q_n$.  Then
\begin{align*}
\norm{P-Q}_1=\sum_{n=1}^N\abs{p_n-q_n}&=\sum_{n\in N_+}(p_n-q_n)-\sum_{n\in N_-}(p_n-q_n)\\
&=2\sum_{n\in N_+}(p_n-q_n)-\sum_{n=1}^N(p_n-q_n)=2(p-q),
\end{align*}
where we have used $\sum_{n=1}^Np_n=\sum_{n=1}^Nq_n=1$.  By the convexity of $-\log(\cdot)$, we get
\begin{align*}
\sum_{n=1}^Np_n\log \frac{p_n}{q_n}&=p\sum_{n\in N_+}\frac{p_n}{p}\left(-\log\frac{q_n/p}{p_n/p}\right)+(1-p)\sum_{n\in N_-}\frac{p_n}{1-p}\left(-\log\frac{q_n/(1-p)}{p_n/(1-p)}\right)\\
&\ge -p\log\left(\sum_{n\in N_+}\frac{q_n}{p}\right)-(1-p)\log\left(\sum_{n\in N_-}\frac{q_n}{1-p}\right)\\
&=p\log \frac{p}{q}+(1-p)\log \frac{1-p}{1-q}.
\end{align*}
Therefore it suffices to show
$$h(p,q):=p\log \frac{p}{q}+(1-p)\log \frac{1-p}{1-q}-2(p-q)^2\ge 0.$$
Fix $q$ and regard the left-hand side as a function of $p$ alone.  Then
\begin{align*}
h'(p,q)&=\log \frac{p}{q}-\log\frac{1-p}{1-q}-4(p-q),\\
h''(p,q)&=\frac{1}{p}+\frac{1}{1-p}-4=\frac{(1-2p)^2}{p(1-p)}\ge 0,
\end{align*}
so $h$ is convex.  Clearly $h'(q,q)=0$, so it follows that $h(p,q)\ge h(q,q)=0$.

\bibliographystyle{plain}
\bibliography{reference}

\end{document}